\def\R{\mathbb R}
\def\C{\mathbb C}
\def\F{\mathscr F}
\def\PP{\mathbb P}
\def\CP{\mathbb{CP}}
\def\CC{\mathscr C} 
\def\restr{\negthickspace \mid}
\def\&{\wedge}
\def\stranspose#1{{}^t\negthinspace{#1}}
\def\itranspose#1{\,{}^t\negthinspace\negthinspace{#1}}
\def\transpose#1{{}^t\negthinspace{#1}}
\def\cod{\operatorname{cod}}
\def\calS{\mathcal S}
\def\Q{\mathcal Q}
\def\utwo{\mathfrak{u}(2)}
\def\so{\mathfrak{so}}
\def\calP{\mathcal P}
\def\V{\mathcal V}
\def\II{\operatorname{II}}
\def\rJ{\mathsf J} 
\def\rK{\mathsf K}
\def\bigJ{\widehat J}
\def\SH{{\hat S}} 
\def\k{\delta}
\def\w{\omega}
\def\pit{\widetilde\pi}
\def\phit{\widetilde\phi}
\def\I{\mathcal I}
\newcommand{\J}{{\EuScript J}}
\def\ri{\mathrm i}
\def\tmod{\ \text{mod}\, }
\def\Jt{\widetilde J}
\newtheorem{theorem}{Theorem}
\newtheorem{lemma}[theorem]{Lemma}
\newtheorem{cor}[theorem]{Corollary}
\newtheorem{prop}[theorem]{Proposition}
\newtheorem*{theorem*}{Theorem}
\theoremstyle{definition}
\theoremstyle{remark}
\begin{document}
\title[Austere Submanifolds of Dimension Four]{Austere Submanifolds of Dimension Four: \\ Examples and Maximal Types}
\author{Marianty Ionel}
\address{Dept. of Mathematics, University of Toledo, 2801 W. Bancroft St., Toledo OH 43606
}
\email{mionel@utnet.utoledo.edu}
\author{Thomas Ivey}
\address{Dept. of Mathematics, College of Charleston, 66 George St., Charleston SC 29424}
\email{iveyt@cofc.edu}

\keywords{austere submanifolds, minimal submanifolds, real K\"ahler submanifolds, exterior differential systems}
\subjclass[2000]{Primary 53B25; Secondary 53B35, 53C38, 58A15}

\begin{abstract}
Austere submanifolds in Euclidean space were introduced by
Harvey and Lawson in connection with their study of calibrated geometries.
The algebraic possibilities for second fundamental forms of 4-dimensional
austere submanifolds were classified by Bryant, into three types which
we label A, B, and C.  In this paper, we show that type A submanifolds correspond
exactly to real K\"ahler submanifolds, we construct new examples of such submanifolds
in $\R^6$ and $\R^{10}$, and we obtain classification results on submanifolds
with second fundamental forms of maximal type.
\end{abstract}

\date{\today}
\maketitle

\section{Introduction}

\subsection*{Definitions and Background}
Recall that for an immersed submanifold $M^n \subset \R^{n+r}$ with
normal bundle $N(M)$, the second fundamental form $\II: TM \otimes
TM \to N(M)$ is defined by
$$\II(X,Y) = \pi_N \nabla_X Y,$$
where $X,Y$ are tangent vectors to $M$, $\nabla$ is the Euclidean
connection in $\R^{n+r}$, and $\pi_N$ is the orthogonal projection
onto the normal bundle. Then $M$ is {\em austere} if, for any normal
vector field $\nu$, the eigenvalues of the quadratic form
$\II_\nu(X,Y):= \nu \cdot \II(X,Y)$ with respect to the metric are
at each point symmetrically arranged around zero on the real line
(equivalently, all odd degree symmetric polynomials in these
eigenvalues vanish). When $n=2$, this just means that $M$ is a
minimal surface in $\R^{2+r}$. However, when $n>2$ the austere
condition is stronger than minimality,
 and leads to a highly overdetermined system of PDEs for the immersion.
For example, because of nonlinearity of the higher degree symmetric
polynomials, it does not suffice to impose the eigenvalue condition
on $\II_{\nu}$ when $\nu$ runs over a basis $\{\nu_a\}$, $1\le a\le
r$, for the orthogonal complement of $T_p M$; rather, the eigenvalue
condition applies to all quadratic forms in the space $|\II_p|
\subset S^2 T_p^*M$ spanned by $\nu_a \cdot \II$.

The austere condition was introduced by Harvey and Lawson \cite{HL}
in connection with special Lagrangian submanifolds.  A special
Lagrangian submanifold in $\C^n$ is a submanifold of real dimension
$n$ that is both Lagrangian and minimal. The importance of special
Lagrangian submanifolds lies mainly in the fact that they are
area-minimizing.  (These submanifolds have also received much recent
attention because of their relation to mirror symmetry \cite{SYZ}.)
Harvey and Lawson showed that the conormal bundle of an immersed
submanifold $M\subset \R^{n}$ is special Lagrangian in the cotangent
bundle $T^*{\R^n}$, equipped with its canonical symplectic structure
and metric, if and only if $M$ is an austere submanifold. Moreover,
Karigiannis and Min-Oo  \cite{KM} showed that a similar result holds
for submanifolds of the $n$-sphere; namely, the conormal bundle of
$M^k\subset S^n$ is special Lagrangian in $T^*S^n$, equipped with
the Stenzel metric and symplectic structure \cite{St} which make
$T^*{S^n}$ a Calabi-Yau manifold, if and only if $M$ is an austere
submanifold of $S^n$.

A systematic study and classification of austere submanifolds of
dimension $3$ in Euclidean space was first undertaken by Bryant
\cite{Baustere}, and generalized by Dajczer and Florit \cite{DF} to
austere submanifolds of arbitrary dimension whose Gauss map has rank
two. In studying the austere submanifolds of Euclidean space, we are
led first to an algebraic problem. Taking $V=\R^n$ with the standard
inner product, a linear subspace $\calS\subset S^2 V^*$ is called
{\em austere} if any element of $\calS$ has eigenvalues occurring in
oppositely signed pairs. Since any subspace of an austere subspace
is also austere, and any isometry of $V$ carries one austere
subspace to another, to classify austere subspaces it suffices to
find all {\em maximal} austere subspaces of $S^2 V^*$ up to
isometries. Bryant classified these spaces for $n=3$ and $n=4$. For
$n=3$, he also described the austere $3$-folds while the case $n=4$
was left open. We recall below his results.

\begin{theorem}[Bryant]  Let $V=\R^3$ and let $\calS \subset S^2 V^*$ be a maximal
austere subspace.  Then $\calS$ is $O(3)$-conjugate to one of the
following:
\begin{align*}
(a) \qquad \calS_A &= \left\{\left.
\begin{bmatrix} A & 0 \\ 0 & 0 \end{bmatrix}
\right| A \text{ is a traceless symmetric $2\times 2$ matrix}
\right\},
\\
(b) \qquad \calS_B &= \left\{\left.
\begin{bmatrix} 0 & b \\ \transpose{b} & 0 \end{bmatrix}
\right| b \text{ is a $1\times 2$ row vector}\right\}.
\end{align*}
Moreover, if $M^3\subset \R^n$ is an austere submanifold such that
for every $p$ in an open subset of $M$ the span $|\II_p|$ is
two-dimensional and is $O(3)$-conjugate to $\calS_A$ then $M$ is a
product of a minimal surface in $\R^{n-1}$ with a line, an
 open subset of a cone or a {\em twisted cone} over a minimal surface in the sphere $S^{n-1}$.
Likewise, if $|\II_p|$ is conjugate to $\calS_B$, then $M$ is an
open subset of a {\em generalized helicoid}.
\end{theorem}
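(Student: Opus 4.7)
The plan is to first classify the maximal austere subspaces $\calS \subset S^2(\R^3)^*$ up to $O(3)$, then separately analyze the two geometric cases. For the algebraic part, I would start from the observation that a symmetric $3 \times 3$ matrix is austere if and only if its eigenvalues are $\{\lambda, -\lambda, 0\}$ for some $\lambda$; equivalently, both $\operatorname{tr}(A) = 0$ and $\det(A) = 0$. Since $\calS$ is linear and both conditions must hold on every element, we get the identity $\det(t A_0 + s B) \equiv 0$ for all $A_0, B \in \calS$. Pick a nonzero $A_0 \in \calS$ and use $O(3)$-conjugation together with a scalar normalization to bring it to $\operatorname{diag}(1, -1, 0)$. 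Expanding $\det(t A_0 + s B)$ via $\det(A+B) = \det A + \operatorname{tr}(\operatorname{adj}(A)\,B) + \operatorname{tr}(A\,\operatorname{adj}(B)) + \det B$ and demanding that each coefficient in $(s,t)$ vanish, together with $\operatorname{tr}(B) = 0$, yields a small polynomial system on the entries of a generic second element $B$. Solving this system (using the residual $O(2)$ freedom fixing $A_0$ to absorb a rotation in the $(e_1, e_2)$-plane) forces $B$ into one of two normal forms: either $B$ is supported on the upper-left traceless $2 \times 2$ block (giving $\calS_A$), or $B$ has the off-block shape of $\calS_B$. One then verifies that each span is genuinely austere and that no further element can be adjoined.

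Turning to the geometric statement for Type A, the hypothesis $|\II_p|$ conjugate to $\calS_A$ singles out a common kernel at each point, hence a tangent line field $\ell \subset TM$ on which $\II$ vanishes in every normal direction: $\II(v, X) = 0$ for all $v \in \ell$ and $X \in TM$. Thus $\ell$ is the relative nullity distribution, its integral curves are geodesics in $M$, and they are actually Euclidean straight lines since their ambient acceleration equals $\II(v, v) = 0$. Consequently $M$ is foliated by straight rulings and descends locally to a 2-dimensional leaf space $\Sigma$ carrying an induced minimal immersion. To distinguish the three subcases I would set up a Darboux frame adapted to $\ell \oplus \ell^\perp$ and read off the structure equations governing how $\ell$ varies transversely; the relevant connection one-forms satisfy an ODE system along $\ell$ whose solutions trichotomize according to whether the rulings are parallel (giving the cylinder over a minimal surface in $\R^{n-1}$), concurrent at a point (the cone over a minimal surface in $S^{n-1}$), or neither (the twisted cone).

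For Type B the hypothesis $|\II_p|$ conjugate to $\calS_B$ yields $\II(e_\alpha, e_\beta) = 0$ for $\alpha, \beta \in \{1,2\}$ and $\II(e_3, e_3) = 0$, while $\II(e_3, e_1)$ and $\II(e_3, e_2)$ span a 2-plane in $N(M)$. In particular the 2-plane field $\Pi = \ell^\perp$ is totally geodesic in $\R^n$, so its leaves are affine 2-planes, and $M$ is swept out by a one-parameter family of such 2-planes along an integral curve of $\ell$. Matching the structure equations for this moving family against the Maurer--Cartan equations of the Euclidean group of $\R^n$ should force the family to be a one-parameter subgroup acting by screw motion, which is precisely the defining property of a generalized helicoid. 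The principal obstacle I anticipate is the Type A trichotomy: whereas the algebraic classification and the Type B case should reduce rather cleanly to rigidity statements, producing the twisted-cone case as genuinely distinct from both the product and the cone, and ruling out intermediate configurations, will require a careful invariant-theoretic reading of the Darboux-frame connection forms along the rulings to show that the associated ODE system admits exactly the three stated families of solutions.
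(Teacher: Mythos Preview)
The paper does not prove this theorem: it is stated, with attribution to Bryant and a citation to \cite{Baustere}, purely as background for the four-dimensional classification that follows. No argument for either the algebraic classification of maximal austere subspaces of $S^2(\R^3)^*$ or for the geometric trichotomy/helicoid conclusions appears anywhere in the paper, so there is no proof here against which your proposal can be compared.

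That said, one concrete error in your sketch is worth flagging. After you normalize $A_0$ to $\operatorname{diag}(1,-1,0)$, the stabilizer of $A_0$ in $O(3)$ is \emph{not} a copy of $O(2)$: a rotation in the $(e_1,e_2)$-plane does not fix $\operatorname{diag}(1,-1,0)$, since the eigenvalues $1,-1,0$ are all distinct. The stabilizer is only the finite group of diagonal sign matrices (and passing to the stabilizer of the line $\R A_0$ adjoins merely the transposition of $e_1,e_2$). So the step in which you ``absorb a rotation in the $(e_1,e_2)$-plane'' cannot be carried out as stated. Your determinant expansion does yield $b_{33}=0$, $b_{13}^2=b_{23}^2$, and then $b_{12}\,b_{13}\,b_{23}=0$ from $\det B=0$, and this already splits the analysis into the two cases leading to $\calS_A$ and (a conjugate of) $\calS_B$; but the normalization of the second generator must be done with the discrete residual symmetry, not a continuous one. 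The geometric outlines for Types A and B are reasonable in spirit and match the strategy in Bryant's original paper, to which you would in any case need to refer for the detailed trichotomy argument.
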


In this context, a generalized helicoid $M^n \subset \R^{2n-1}$ is
the image of a parametrization
$$(x_0,\ldots,x_{n-1})\mapsto (x_0, x_1 \cos(\lambda_1 x_0),x_1 \sin(\lambda_1 x_0),\ldots, x_s \cos(\lambda_s x_0),x_s \sin(\lambda_s x_0),x_{s+1}, \ldots, x_{n-1}),$$
where $\lambda_1, \ldots, \lambda_s$ are positive constants and
$s<n$. For the construction of the twisted cone, see
\cite{Baustere}.

\begin{theorem}[Bryant]\label{bryantfour}
  Let $V=\R^4$ and let
$\calS \subset S^2 V^*$ be a maximal austere subspace.  Then $\calS$
is $O(4)$-conjugate to one of the following:
\begin{align*}
(a) \qquad \Q_A &= \left\{\left.
\begin{bmatrix} A & B \\ B & -A \end{bmatrix}
\right| A,B \text{ are symmetric $2\times 2$ matrices} \right\},
\\
(b) \qquad \Q_B &= \left\{\left.
\begin{bmatrix} m I  & B \\ \transpose{B} & -m I \end{bmatrix}
\right| m \in \R,  B \text{ is a $2\times 2$ matrix}\right\},
\\
(c) \qquad \Q_C &= \left\{\left.
\begin{bmatrix} 0 & x_1 & x_2 & x_3 \\
    x_1 & 0 & \lambda_3 x_3 & \lambda_2 x_2 \\
    x_2 & \lambda_3 x_3 & 0 & \lambda_1 x_1 \\
    x_3 & \lambda_2 x_2 & \lambda_1 x_1 & 0
\end{bmatrix} \right| x_1, x_2, x_3 \in \R\right\},
\end{align*}
where, in the last case, parameters $\lambda_1,\lambda_2,\lambda_3$
satisfy
\begin{equation}\label{lambdaq}
\lambda_1 \ge \lambda_2 \ge 0 \ge\lambda_3, \qquad \lambda_1
\lambda_2 \lambda_3 + \lambda_1 + \lambda_2 + \lambda_3 = 0.
\end{equation}
\end{theorem}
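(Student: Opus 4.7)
The plan is to recast the problem algebraically and then exploit the special isomorphism $\mathrm{Spin}(4)\cong SU(2)_+\times SU(2)_-$. A symmetric $4\times 4$ matrix has eigenvalues symmetric about zero if and only if its first and third power sums vanish, so the austere condition on $\calS\subset S^2V^*$ amounts to the two identities $\operatorname{tr}(A)=0$ and $\operatorname{tr}(A^3)=0$ for every $A\in\calS$. The first is linear; polarizing the second gives the trilinear identity
\[
\operatorname{tr}(ABC) = 0 \qquad \text{for all } A,B,C\in\calS.
\]
Thus $\calS$ is austere precisely when it lies in $S^2_0 V^*$ and is totally isotropic for this symmetric three-form.

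Next I would pass to the spin picture. Dividing by the kernel of the double cover identifies $SO(4)/\{\pm I\}$ with $SO(3)_+\times SO(3)_-$, and the $9$-dimensional irreducible $SO(4)$-module $S^2_0V^*$ becomes $\R^3\otimes\R^3$, viewed as the space of $3\times 3$ real matrices $M$ with $(g_+,g_-)\cdot M = g_+ M g_-^{-1}$. A direct character count (or inspection via Schur) shows that the space of $SO(3)\times SO(3)$-invariant cubics on $\R^3\otimes\R^3$ is one-dimensional, spanned by $\det(M)$, so $\operatorname{tr}(A^3)$ corresponds to a nonzero scalar multiple of $\det(M)$. The residual $\mathbb{Z}/2$ in $O(4)/SO(4)$ swaps the two spin factors and acts as transposition, under which $\det$ is preserved. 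The theorem thereby reduces to classifying, up to $(SO(3)\times SO(3))\rtimes\mathbb{Z}/2$, the maximal subspaces $\mathcal{M}$ of $3\times 3$ real matrices on which $\det$ vanishes identically.

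For such a singular subspace $\mathcal{M}$, I would split the analysis according to the common left and right kernels $L=\{v:v^{\mathrm t}M=0~\forall M\in\mathcal{M}\}$ and $R=\{v:Mv=0~\forall M\in\mathcal{M}\}$. If $L$ or $R$ is nontrivial, then (using the transposition and $SO(3)$-action) I may take $L=\R e_1$, so that $\mathcal{M}$ is contained in the $6$-dimensional compression space of matrices with vanishing first row; transporting back through the spin isomorphism produces $\Q_A$ up to $O(4)$-conjugacy. In the kernel-free case $L=R=0$, every element has rank at most $2$ but the elements share no null direction. Picking a rank-$2$ element $M_0\in\mathcal{M}$ and expanding $\det(M_0+tM)\equiv 0$ in $t$ for every $M\in\mathcal{M}$ yields linear constraints that cut down $\mathcal{M}$, and the remaining continuous moduli are precisely the $SO(3)\times SO(3)$-invariants of $\mathcal{M}$ (certain relative singular values), matching the free scalar $m$ in $\Q_B$ and the triple $(\lambda_1,\lambda_2,\lambda_3)$ in $\Q_C$. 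The algebraic constraint \eqref{lambdaq} emerges as the single nontrivial condition for $\det$ to vanish on the three-parameter family defining $\Q_C$.

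The principal obstacle is the case $L=R=0$: under $GL(3)\times GL(3)$ the Atkinson--Flanders theory identifies all maximal singular subspaces with compression spaces, so refining to $SO(3)\times SO(3)$-equivalence forces one to track continuous invariants (the $\lambda_i$ in $\Q_C$, the scalar $m$ in $\Q_B$) and prove that these are the only ones. One must then separately verify the maximality of each of $\Q_A$, $\Q_B$, $\Q_C$ by showing that any proper enlargement contains an element with $\operatorname{tr}(A^3)\ne 0$, which reduces to an explicit computation once the normal forms are in place.
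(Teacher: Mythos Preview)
This theorem is not proved in the present paper; it is stated as a result of Bryant and the reader is referred to \cite{Baustere}. There is therefore no proof here against which to compare your proposal.

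On the substance of your outline: the reduction is sound. The equivalence of the austere condition on $4\times 4$ symmetric matrices with the simultaneous vanishing of $\operatorname{tr}A$ and $\operatorname{tr}A^3$, the passage via the exceptional isomorphism to $S^2_0\R^4\cong\R^3\otimes\R^3$ as an $SO(3)\times SO(3)$-module, and the identification of the cubic invariant $\operatorname{tr}A^3$ with a nonzero multiple of $\det M$ are all correct, and this is indeed the natural route to the classification.

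There is, however, a genuine error in your case analysis. Your assertion that ``under $GL(3)\times GL(3)$ the Atkinson--Flanders theory identifies all maximal singular subspaces with compression spaces'' is false: the $3$-dimensional space of skew-symmetric $3\times 3$ matrices is a maximal singular subspace with $L=R=0$ that is \emph{not} $GL\times GL$-equivalent to any compression space (it contains no rank-one element, whereas every compression space does). More to the point, since $\Q_A$, $\Q_B$, $\Q_C$ have dimensions $6$, $5$, $3$ respectively while a compression space in $M_3(\R)$ has dimension $6$, neither $\Q_B$ nor $\Q_C$ can correspond to a compression space in the $3\times 3$ model. The three families are therefore already distinct at the $GL\times GL$ level, not merely $SO\times SO$-refinements of a single compression orbit as your sketch suggests. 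Your treatment of the kernel-free case must be redone: one has to classify the non-compression maximal singular subspaces of $M_3(\R)$ directly (the skew space being the prototype for $\Q_C$ at $\lambda_1=\lambda_2=\lambda_3=0$), and only then normalize under $SO(3)\times SO(3)\rtimes\mathbb{Z}/2$ to extract the continuous parameters and the constraint \eqref{lambdaq}.
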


We will say that an austere submanifold $M^4$ is of type A, B or C
respectively if for every point $p\in M$ the space $|\II_p|$ is
$O(4)$-conjugate to a subspace of the corresponding maximal austere
subspace given in Theorem \ref{bryantfour}. (In the case of type C,
we allow the parameters $\lambda_i$ to vary from point to point in
$M$.) It is possible for $|\II_p|$ to be conjugate to a subspace of
more than one maximal subspace (e.g., when $M$ is a hypersurface),
but we will assume that there is one particular maximal subspace
which applies at all points of $M$.

It is easy to give examples of austere 4-folds of types A and C.
When the codimension $r$ is even, any holomorphic submanifold $M^4
\subset \C^{2+(r/2)}$ is an austere $4$-fold of type A. To see this,
let $\rJ$ be the complex structure, and note that for any vector
fields $X,Y$ tangent to $M$
\begin{equation}\label{nuJnu}
\nu_a \cdot \II(X,\rJ Y) = \nu_a \cdot \nabla_X \rJ Y = \nu_a \cdot
\rJ (\nabla_X Y) = -(\rJ \nu_a) \cdot \nabla_X Y,
\end{equation}
and therefore
\begin{equation}\label{J2ff}
\nu_a \cdot \II(X,\rJ Y) = \nu_a\cdot \II(\rJ X, Y).
\end{equation}
In particular, if $X$ is an eigenvector for $\II$ in some normal
direction, then $\rJ X$ is an eigenvector for the opposite
eigenvalue. It also follows from \eqref{J2ff} that $\II$ is
represented by matrices in $\Q_A$ when we choose a moving frame
$e_1, e_2, e_3, e_4$ along $M$ such that   $\rJ e_1 = e_3$ and $\rJ e_2
= e_4$. As for austere 4-folds $M$ of type C, another result of
Bryant (see \cite{Baustere}, Theorem 3.1) implies that $M$ is a
generalized helicoid in $\R^7$ if and only if the parameters
$\lambda_1,\lambda_2,\lambda_3$ are identically zero.  On the other hand,
we do not know
if other austere 4-folds of type C exist. Similarly, the only
examples of austere 4-folds of type B which we know of are ruled by 2-planes.

\subsection*{Our Approach}
Our goals in studying austere submanifolds are to obtain new
examples and, where possible, to classify austere 4-folds of a given
type.  We employ the method of moving frames to generate exterior
differential systems (EDS) whose solutions correspond to austere
4-folds of a given type and codimension. When such systems are
involutive, Cartan-K\"ahler theory (see
\cite{BCG3}) gives us a measure of the size of
the solution space, in the form of what initial data may be chosen
for a sequence of Cauchy problems that determine every possible
local solution.  Studying the structure of the exterior differential
system can also enable us to establish global properties of
solutions (see, e.g.,  Prop. \ref  {Kprop1} and Prop. \ref {heliC} below).

One could organize a classification scheme for austere 4-folds in
Euclidean space by type and the dimension $\k$ of $|\II|$ (assumed
constant over the submanifold).\footnote{By duality, this $\k$
is also the rank of $\II$ as a linear map into the normal bundle; hence, 
we will often refer to it as the {\em normal rank} of $M$.}  However, we expect to obtain the
strongest classification theorems when the austere condition is
strongest, that is, when $\k$ is as large as possible for a given
type. Thus, like the earlier results of Bryant on 3-folds, the
classification results in this paper are obtained assuming that
$|\II|$ is conjugate to one of $\Q_A$, $\Q_B$ or $\Q_C$. (In this
case, we say $M$ is of {\em maximal} type A, B, or C, respectively.)
Classifying austere 4-folds of $M$ of non-maximal type would involve
parametrizing the possible subspaces of a given dimension
within $\Q_A$, $\Q_B$ or $\Q_C$ and analyzing the associated EDS.  In many
instances, the many additional parameters involved make the EDS
intractable, even with the assistance of computer algebra systems.

To obtain new examples, an often successful strategy is to assume
additional conditions.  In the last part of this paper, we obtain
new examples of austere 4-folds of non-maximal type A by assuming
that $\k=2$ and the space $|\II|$ lies on a non-principal orbit of
the action of the symmetry group of $\Q_A$ on the Grassmannian of
two-dimensional subspaces of $\Q_A$.   One can also carry out this
approach for type B with $\k=2$, but this yields no new examples.
The approach is not feasible for type C
because in that case the symmetry group is discrete.


\subsection*{Outline and Summary of Results}
In section \ref{framesec} we define the moving frames and associated
geometric structures we will use in the rest of the paper.  Because the
exterior differential systems we use are tailored for submanifolds
in a specific codimension, we prove a preliminary result in \S\ref{codsec}
to the effect that, when $|\II|$ satisfies certain algebraic criteria, 
then $\delta$ equals the {\em effective codimension} of $M$
(i.e., the codimension of $M$
within the smallest totally geodesic submanifold containing it).

Section \ref{Asection} is concerned with austere 4-folds of type A.
Before specializing to maximal type A, in \S\ref{typeAkahler}
we derive sufficient conditions,
in terms of $|\II|$, for a type A austere 4-fold $M$ to be K\"ahler.
(Note that when $M$ is type A, it carries a well-defined complex
structure $\rJ$ satisfying \eqref{J2ff}, but $\rJ$ need not extend
to the ambient space.)  This is a partial converse to a theorem of
Dajczer and Gromoll \cite{DG}, and we show that the K\"ahlerness conditions
apply whenever $|\II|$ has dimension at least two.
The main result of \S\ref{maxtypeA} is the
description of the generality of maximal type A austere
$4$-folds. We show that such
submanifolds in $\R^{r+4}$ depend on a choice of $2(r-1)$ functions
of 2 variables, in the sense of the Cartan-K\"ahler Theorem. 
We conclude that, generically, these austere
submanifolds of type A are not holomorphic submanifolds.

Section \ref{BCsection} is concerned with classifying
austere 4-folds of maximal types B and C.  In
\S\ref{Bsection} we show that austere 4-folds of maximal type B do not exist.
In section \S\ref{Csection}, we prove two results about maximal type C.
First, if $|\II |$ is at each point conjugate to a fixed
maximal austere subspace $\Q_C$ (i.e., the parameters $\lambda_i$ are assumed to be constant over $M$)
then $M$ must be a generalized helicoid.
Second, even without requiring the parameter values to be fixed,
we show that there is only a finite-dimensional family of submanifolds
of maximal type C.  This follows from showing that, away from certain exceptional parameter
values, the characteristic variety of the relevant EDS is empty; we also
show that the parameters take value in the exceptional locus on (at most)
the complement of an open dense subset of $M$.

In section \ref{examples} we give some interesting examples of
austere $4$-folds of non-maximal type.  As mentioned above,
one approach is to assume that $|\II|$, as a point in the 
Grassmannian of the relevant maximal austere subspace,
is non-generic for the action of the symmetry group
(i.e., it lies along a non-principal orbit).  
In \S\ref{typeAk2}, we use the symmetry group of $\Q_A$
to normalize 2-dimensional subspaces of $\Q_A$,
and identify the non-generic subspaces.   We then classify the
type A austere $4$-folds for which $|\II|$ has dimension two 
and is of fixed non-generic type, assuming that the Gauss map is nondegenerate.
(If the Gauss map of an austere 4-fold is degenerate, it must have
rank 2; such submanifolds 2 were classified by Dajczer and Florit \cite{DF}.)
These submanifolds, which all lie in a totally geodesic $\R^6$, turn out
to be either holomorphic submanifolds, products of minimal surfaces,
 or else $2$-ruled submanifolds.  The latter have the property that 
the image of the map $\gamma:M\to G(2,6)$,
taking point $p \in M$ into the subspace of $\R^6$ parallel to
the ruling through $p$, is a holomorphic curve.  Such curves are not arbitrary, however;
we also show how these ruled submanifolds may be constructed by instead choosing
a general holomorphic curve in $\CP^3$.

We plan to carry out a full classification of 2-ruled
austere 4-folds in our next paper.

\section{Moving Frames}\label{framesec}
In this section we discuss the first applications of moving frames to the geometry of austere
4-folds.  In particular, we obtain upper bounds on the effective codimension of the submanifold, and show that
adapted moving frames correspond to integrals of a certain Pfaffian exterior differential system on the appropriate frame bundle.

\subsection{Codimension}\label{codsec}
Let $M^4 \subset \R^{r+4}$ be austere.
For $p\in M$, let $N_p$ denote the orthogonal complement of $T_pM$ in $\R^{r+4}$.
A first approximation to the effective codimension of $M$ is the dimension of its
{\em first normal space} $N^1_p M$, which is the image of the second fundamental form $\II : S^2 T_p M \to N_p$.
Let $\k(p)$ denote the dimension of the first normal space,
which we will refer to as the {\em normal rank} of $M$ at $p$.
This is a lower semicontinuous function on $M$, bounded above by the codimension of $M$.  (For an austere submanifold of a given type, $\k(p)$ is also
bounded above by the dimension of the maximal austere subspace in which $|\II_p|$ lies.)
Hence $\delta(p)$ will be constant on an open set in $M$, so without loss of generality we will assume that $\k(p)$ is constant.


\begin{prop}\label{BCcodimension}
Suppose that $M \subset \R^{r+4}$ is type B or type C.  Then the effective codimension of $M$ equals $\k$.
\end{prop}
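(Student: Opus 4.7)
The overall strategy is to reduce the proposition to showing that the first normal bundle $N^1$ is parallel with respect to the normal connection $\nabla^\perp$. Once this is established, an elementary parallel-transport argument (the Euclidean case of Erbacher's theorem) implies that $M$ lies in the affine subspace through any base point $p\in M$ spanned by $T_pM\oplus N^1_p$, which has dimension $4+\delta$. The reverse inequality is immediate: any totally geodesic $\Sigma\subset\R^{r+4}$ containing $M$ satisfies $T_p\Sigma\supset T_pM+N^1_p$ at every $p\in M$, so the effective codimension is at least $\delta$.

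To show $N^1$ is parallel, adapt the normal frame $\nu_1,\ldots,\nu_r$ so that $\nu_1,\ldots,\nu_\delta$ span $N^1$; then $h^\alpha_{ij}\equiv 0$ for $\alpha>\delta$. Applying the Codazzi identity to these identically vanishing components and expanding $\omega_{a\alpha}=L^{a\alpha}_k\,\omega_k$ yields, for each $\alpha>\delta$, that the tensor
\[
 T_{ijk}\;:=\;\sum_{a\le\delta} L^{a\alpha}_k\, h^a_{ij}\;=\;(Q_k)_{ij},\qquad Q_k:=\sum_a L^{a\alpha}_k\,h^a\in|\II|,
\]
must be totally symmetric in $i,j,k$; equivalently, the matrices $Q_k\in|\II|$ ($k=1,\ldots,4$) satisfy $(Q_k)_{ij}=(Q_j)_{ik}$. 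Since $h^1,\ldots,h^\delta$ are linearly independent by construction, showing that each $Q_k$ vanishes forces $L^{a\alpha}_k=0$, hence $\omega_{a\alpha}=0$, so $N^1$ is parallel.

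The proof therefore reduces to the algebraic claim: a 4-tuple $(Q_1,Q_2,Q_3,Q_4)$ of matrices in $\Q_B$ (respectively $\Q_C$) with $T_{ijk}:=(Q_k)_{ij}$ totally symmetric must consist of zero matrices. For type C, every matrix has vanishing diagonal, so $T_{iik}=0$; by full symmetry, $T_{ijk}=0$ whenever two indices coincide. For the four triples with distinct indices, the structural relations $(Q)_{34}=\lambda_1 (Q)_{12}$, $(Q)_{24}=\lambda_2 (Q)_{13}$, $(Q)_{23}=\lambda_3 (Q)_{14}$ built into $\Q_C$, combined with total symmetry at an appropriate choice of $k$ (for example, $T_{123}=T_{231}=\lambda_3 T_{141}=0$), kill each remaining value, with no hypothesis on the $\lambda_i$ needed. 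For type B, the identities $(Q)_{12}=(Q)_{34}=0$ and $(Q)_{11}=(Q)_{22}=-(Q)_{33}=-(Q)_{44}$, pushed through full symmetry, first annihilate every $T_{ijk}$ whose multiset contains both of $\{1,2\}$ or both of $\{3,4\}$; the diagonal equalities then compare across multisets (for instance, $T_{443}=(Q_3)_{44}=-m^{(3)}$ must equal $T_{344}=(Q_4)_{34}=0$, forcing $m^{(3)}=0$) and propagate to force the remaining entries of each $Q_k$ to vanish. The main obstacle is this careful multiset-by-multiset propagation in the type B case; in type C the vanishing-diagonal condition essentially automates the argument.
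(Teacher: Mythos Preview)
Your proof is correct and follows essentially the same route as the paper: both adapt the normal frame, extract from Codazzi the condition that the tensor $T_{ijk}=\sum_a L^{a\alpha}_k h^a_{ij}$ be fully symmetric, and then verify the purely algebraic fact that no nonzero such tensor exists with each slice $Q_k\in\Q_B$ (resp.\ $\Q_C$)---which is exactly the statement $\Q^{(1)}=0$ that the paper records as Lemma~\ref{BCprolong}. The only cosmetic difference is that the paper phrases this vanishing in the language of prolongations and integral elements of a tableau, whereas you carry out an equivalent entry-by-entry verification; your explicit mention of the reverse inequality and of Erbacher's reduction theorem is a nice touch that the paper leaves implicit.
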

\begin{proof} Let $e_1,\ldots, e_4, \nu_1, \ldots, \nu_r$ be a moving frame along $M$, such that at each point $p$,
$e_1,\ldots, e_4$ span $T_pM$,
$\nu_1, \ldots, \nu_\k$ span $N^1_pM$,
and $\nu_{\delta+1},\ldots,\nu_r$ are orthogonal to
$T_pM \oplus N^1_pM$.

Let $S^a_{ij}(p) = \nu_a \cdot\II(e_i,e_j)$.
These symmetric matrices span the subspace $|\II_p|$,
when expressed in terms of the basis $e_1, \ldots, e_4$.
(We will use
index ranges $1 \le i,j,k\le 4$, $1 \le a,b,c\le \k$ and $\delta < \beta \le r$.)
Let
\begin{equation}\label{defofT}
\nabla_{e_i}\nu_a(p) =  T^\beta_{ai}\nu_\beta(p)
\mod T_pM \oplus N^1_p M.
\end{equation}
(We will use summation convention from now on.)  Differentiating
\begin{equation}\label{defofS}
\nabla_{e_j} e_i \equiv S^a_{ij} \nu_a  \mod T_pM
\end{equation}
along the $e_k$ direction, skew-symmetrizing in $j$ and $k$,
and taking the component in the direction of $\nu_\beta$ gives
\begin{equation}\label{TSeliot}
T^\beta_{ak}S^a_{ij} - T^\beta_{aj} S^a_{ik} = 0.
\end{equation}

Let $V=T_pM$ and let $\Q = |\II_p| \subset V^* \otimes V^*$.  Define
the {\em prolongation of $\Q$} as
\begin{equation}\label{Qprolongdef}
\Q^{(1)} := \Q \otimes V^* \,\cap\, V^* \otimes S^2 V^*.
\end{equation}
(This is a special case of the definition of the prolongation of a
subspace $\Q \subset W \otimes V^*$; see \cite{CFB}, Chapter 4.)
Then the equation \eqref{TSeliot} implies that for each $\beta$ the tensor
$U^\beta_{ijk} :=T^\beta_{ak}S^a_{ij}$ lies in the space $\Q^{(1)}$.

However, by Lemma \ref{BCprolong} below, the space $\Q^{(1)}$ has dimension zero.  If all the matrices $S^a_{ij}$ are identically
zero, then $M$ is totally geodesic and the proposition is true with
$\k = 0$.  Otherwise, the second fundamental form
is nonzero on an open set in $M$, and it follows that $T^\beta_{ak}=0$.  In that case, \eqref{defofT} and \eqref{defofS}
show that the span $\{e_1, \ldots e_4, \nu_1, \ldots, \nu_\k\}$
is fixed as we move along $M$.  Thus, $M$ lies in an affine linear
subspace of dimension $\k+4$.
\end{proof}

\begin{lemma}\label{BCprolong}
Let $\Q \subset \Q_B$ or $\Q\subset \Q_C$.
Then $\Q^{(1)}$, as defined by \eqref{Qprolongdef}, has dimension zero.
\end{lemma}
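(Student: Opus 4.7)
The strategy is to unwind the definition of $\Q^{(1)}$ and exploit the algebraic structure of $\Q_B$ and $\Q_C$ to force all entries to vanish. Since any subspace $\Q\subset\Q_B$ (respectively $\Q\subset\Q_C$) satisfies $\Q^{(1)}\subset\Q_B^{(1)}$ (respectively $\Q^{(1)}\subset\Q_C^{(1)}$), it suffices to treat the maximal austere subspaces themselves. If $U \in \Q \otimes V^* \cap V^* \otimes S^2 V^*$, then $U_{ijk}$ is symmetric in $(i,j)$ (because $\Q\subset S^2 V^*$) and in $(j,k)$ (from the second factor); since these two transpositions generate the full symmetric group on three letters, $U_{ijk}$ is totally symmetric. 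Hence $\Q^{(1)}$ may be identified with the space of totally symmetric $3$-tensors $U$ such that for each fixed $k$ the $4\times 4$ symmetric matrix $U(k) := (U_{ij,k})$ lies in $\Q$.

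For $\Q = \Q_C$, every matrix has zero diagonal, so $U_{ii,k}=0$ for all $i,k$. Total symmetry rewrites this as $U_{ik,i}=0$, saying that the whole $i$-th row of $U(i)$ vanishes; since the first row of a $\Q_C$ matrix is $(0,x_1,x_2,x_3)$, this immediately gives $U(1)=0$. Applying total symmetry to each triple $\{1,j,k\}$ of distinct indices then produces a chain of equalities $U_{ij,k}=U_{jk,i}=U_{ik,j}$, one term of which is an entry of $U(1)$ and hence zero, killing the other two parameters. The remaining parameters of $U(2), U(3), U(4)$ are eliminated by applying the analogous identity to the triples $\{1,k,k\}$. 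A pleasant feature of this case is that the specific values of $\lambda_1,\lambda_2,\lambda_3$ never enter, so the argument is uniform across the entire parameter family defining $\Q_C$.

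For $\Q = \Q_B$, let $m_k$ be the scalar parameter and $B_k$ the off-diagonal block of $U(k)$. The $\Q_B$ structure gives $U_{12,k}=U_{34,k}=0$ and forces the diagonal of $U(k)$ to read $(m_k,m_k,-m_k,-m_k)$. Then $U_{22,1}=U_{12,2}=0$ forces $m_1=0$, and analogous identities using the other diagonal entries yield $m_2=m_3=m_4=0$. With the scalars gone, the vanishing of $U_{12,k}$ and $U_{34,k}$ combined with total symmetry identifies each entry of every $B_k$ with an entry of some other $U(\ell)$ that has already been shown to vanish, forcing $B_k=0$ for all $k$ and thus $U=0$.

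The main obstacle is bookkeeping rather than any conceptual difficulty: no single identity is decisive, and vanishing only emerges after systematically combining total symmetry with the algebraic constraints defining $\Q_B$ or $\Q_C$. The small dimensions at play nevertheless make this case-by-case analysis tractable by hand, and it is worth noting that the argument never requires any of the parameters $\lambda_i$ in the $\Q_C$ case to be nonzero.
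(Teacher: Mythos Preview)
Your proof is correct and takes a genuinely different route from the paper's. The paper recasts the prolongation as the space of integral elements of a linear Pfaffian system: it writes the generator 2-forms $\pi^a_{ij}\wedge\omega^j$ in tableau form (displayed for $\Q_B$ in \eqref{Btableau}) and argues, by inspecting which $\omega^k$ terms can appear in each row, that every $\pi_a$ must vanish on any admissible integral element. You instead work directly with the tensor $U_{ijk}$, observe that membership in $\Q\otimes V^*\cap V^*\otimes S^2V^*$ forces total symmetry, and then combine this symmetry with the explicit algebraic constraints defining $\Q_B$ and $\Q_C$ (zero diagonal, the $(1,2)$ and $(3,4)$ entries vanishing, etc.) to chase all components to zero. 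The two arguments are solving the same linear system, but your approach is more elementary and self-contained---it requires no EDS vocabulary and makes transparent the observation you note at the end, that the $\lambda_i$ never enter the computation for $\Q_C$. The paper's approach, on the other hand, is phrased in the language used throughout the rest of the article and connects naturally with the later Cartan-character computations. One minor remark: in your chain of equalities for the triple $\{1,j,k\}$ you write $U_{ij,k}=U_{jk,i}=U_{ik,j}$ with a stray $i$; presumably $i=1$ is intended there.
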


\begin{proof}  It suffices to verify that the prolongations of $\Q_B$ and
$\Q_C$ have dimension zero.

It is convenient for us to compute the prolongation as the space of
integral elements for a linear Pfaffian system with independence condition.
(See \cite{CFB}, Chapters 4-5, or \cite{BCG3}, Chapter 4 for more examples.)
In the case of $\Q_B$, the 2-forms of such a system would take the form
\begin{equation}\label{Btableau}
\begin{bmatrix}
\pi_0 & 0 & \pi_1 & \pi_2 \\
0 & \pi_0 & \pi_3 & \pi_4 \\
\pi_1 & \pi_3 & -\pi_0 & 0 \\
\pi_2 & \pi_4 & 0 & -\pi_0
\end{bmatrix} \& \begin{bmatrix} \w^1 \\ \w^2 \\ \w^3 \\ \w^4\end{bmatrix},
\end{equation}
where $\pi_0, \ldots, \pi_4, \w^1, \ldots \w^4$ are linearly independent 1-forms and
$\w^1 \& \w^2 \& \w^3 \& \w^4 \ne 0$ is the independence condition
for integral elements.  Thus, an integral element is described
by setting $\pi_a = P_{ak} \w^k$ for some coefficients $P_{ak}$ such that the above 2-forms vanish.
The possible values for these coefficients give a parametrization of the space $\Q_B^{(1)}$, since the prolongation is kernel of the composition
$$\Q_B \otimes V^* \longrightarrow S^2 V^* \otimes V^* \longrightarrow V^* \otimes \Lambda^2 V^*,$$
where the first map is inclusion and the second skew-symmetization.

The vanishing of the first 2-form in \eqref{Btableau}
implies that $\pi_0,\pi_1,\pi_2$ cannot contain any $\w^2$ terms.
Applying the same idea to each of the other 2-forms implies, in particular,
that on any integral element $\pi_0$ cannot contain terms involving $\w^1, \w^3$ or $\w^4$ either.  Thus $\pi_0 = 0$ on any integral element.
Then, examining the first and third rows in \eqref{Btableau} shows that, on any integral element, $\pi_1$ must lie in the intersection of spans $\{\w^1, \w^2\}$ and $\{\w^3, \w^4\}$, and thus must be zero.  We similar find the $\pi_2, \pi_3, \pi_4$ must vanish on any integral element, and the prolongation space
has dimension zero.

The proof that $\Q_C^{(1)}=0$ is similar.
\end{proof}

The argument of Lemma \ref{BCprolong} does not automatically apply to
subspaces of $\Q_A$, because the prolongation of $\Q_A$ is nonzero.
In fact, $\Q_A$ is easily seen to be the space of symmetric matrices that
anticommute with the complex structure represented by
\begin{equation}\label{defofJ}
J = \begin{bmatrix} 0 & 0 & -1 & 0 \\ 0 &  0 & 0 & -1 \\ 1 & 0 & 0 & 0\\ 0 & 1 & 0 &0\end{bmatrix}.
\end{equation}
Thus, because $-J = \transpose{J}$, a quadratic form ${\mathsf S}$ on the tangent space represented
by a matrix in $\Q_A$ is {\em $J$-linear}, i.e., ${\mathsf S}(X, JY) = {\mathsf S}(JX,Y)$.  The prolongation
of $\Q_A$ is the space of $J$-linear cubic forms, which is spanned by the real and imaginary parts of complex-linear cubic forms in two complex variables,
and thus has real dimension 8.

Likewise, the conclusions of Proposition \ref{BCcodimension} do not apply to all austere
4-folds of type A.  For, we may construct holomorphic submanifolds of real dimension 4
inside $\C^N$, for arbitrarily high $N$, which lie in no lower-dimensional subspace.
One example is the Segre embedding of the product of $\CP^1$ with a rational normal curve in $\CP^n$, given in terms of homogeneous coordinates $[u,v]$ and $[z,w]$ by
$$([u,v],[z,w]) \mapsto [z u^n, w u^n, z u^{n-1} v, w u^{n-1} v, \ldots, z v^n, w v^n].$$
This embedding maps $\CP^1 \times \CP^1$ into $\CP^{2n+1}$, and we obtain an
austere 4-fold in $\C^{2n+1}$ by intersecting with the domain of a standard chart
in projective space.

\subsection{Moving Frames and Pfaffian Systems}\label{standardsec}
Let $\F$ be the sub-bundle of the general linear frame bundle of $\R^{4+r}$ whose fiber
at a point $p$ consists of all bases $(e_1,\ldots,e_4,\nu_1,\ldots, \nu_r)$ for $T_p\R^{4+r}$ such that
the $e_i$ are orthonormal and orthogonal to the $\nu_a$.  (Here, we use
index ranges $1 \le i,j,k\le 4$ and $1 \le a,b,c\le r$.) We'll refer to $\F$ as the {\em semi-orthonormal} frame bundle.

As in \S\ref{codsec}, along a submanifold $M^4 \subset \R^{4+r}$
we may adapt a moving frame $e_1, \ldots, e_4$, $\nu_1, \ldots, \nu_r$ such that
at each $p\in M$ the frame vectors $e_1(p), \ldots, e_4(p)$ are an orthonormal basis for $T_p M$.
(The reason we do not also choose the $\nu_a$ to be orthonormal is that we will adapt them so
that the quadratic forms $\II_{\nu_a}$ are represented by a particular basis for an austere subspace.)
Then our moving frame along $M$ is a section of $\F\restr_M$.
 We will characterize such sections in terms of the canonical and connection 1-forms on $\F$.

These 1-forms are defined in terms of the exterior derivatives of the basepoint $p$ and
the frame vectors, regarded as $\R^{4+\k}$-valued functions on $\F$.  We let
\begin{equation}\label{denustreqs}
\begin{aligned}
dp &= e_i \w^i + \nu_a \theta^a,\\
de_i &= e_j \phi^j_i + \nu_a \eta^a_i,\\
d\nu_a &= e_j\xi^j_a + \nu_b \kappa^b_a,
\end{aligned}
\end{equation}
define the canonical forms $\w^i, \theta^a$ and connection forms
$\phi^i_j$, $\eta^a_i$, $\xi^i_a$ and $\kappa^a_b$.  These 1-forms span the cotangent
space of $\F$ at each point but are not linearly independent;
differentiating the equations $e_i \cdot e_j =\delta_{ij}$ and $e_i \cdot \nu_a =0$
yields the relations
\begin{equation}\label{xietarel}
\phi^i_j = -\phi^j_i, \qquad \xi^i_a = -\eta^b_i g_{ba},
\end{equation}
where $g_{ab} = \nu_a \cdot \nu_b$.
The exterior derivatives of the canonical forms satisfy structure
equations
\begin{equation}\label{blockstreqs}
d\begin{bmatrix} \omega \\ \theta \end{bmatrix} =
- \begin{bmatrix} \phi & -\transpose{\eta}\,g \\
\eta & \kappa \end{bmatrix} \wedge
\begin{bmatrix} \omega \\ \theta \end{bmatrix},
\end{equation}
where $\w,\theta,\phi,\eta,\kappa$ denote vector and matrix-valued 1-forms with
components $\w^i$, $\theta^a$, $\phi^i_j$, $\eta^a_i$, and $\kappa^a_b$ respectively,
and $g$ has entries $g_{ab}$.  Differentiating these equations, and noting that $\R^{4+\k}$ is flat, gives the
derivatives of the matrices of connection forms as
\begin{equation}\label{dconn}
\begin{aligned}
d\phi &= -\phi \wedge \phi +\transpose{\eta} \wedge g \eta,\\
d\eta &= -\eta \wedge \phi -\kappa \wedge \eta,\\
d\kappa &= \eta \wedge \transpose{\eta} g -\kappa \wedge \kappa,
\end{aligned}
\end{equation}
along with
$$d g= g\kappa + \transpose{\kappa} g.$$

We note the following fundamental fact relating adapted frames and submanifolds of $\F$:
\begin{quote}
A submanifold $\Sigma^4 \subset \F$
is a section given by an adapted
frame along some submanifold $M \subset \R^{4+\delta}$ if and only if
$\w^1 \& \w^2 \& \w^3 \& \w^4\restr_\Sigma \ne 0$ and $\theta^a\restr_\Sigma=0.$
\end{quote}
(We adopt the convention that $|_\Sigma$ for a differential form denotes the pullback to $\Sigma$ of that
form under the inclusion map.)
The first of these conditions is a non-degeneracy assumption called the {\em independence condition}.
The second condition implies, by differentiation, that
$$\eta^a_i \restr_\Sigma = S^a_{ij} \w^j.$$
for some functions $S^a_{ij}$.  These functions give the components of
the second fundamental form in this frame, i.e.,
\begin{equation}\label{noose}
\nu_a \cdot \II(e_i, e_j) = S^a_{ij}.
\end{equation}

\subsubsection*{The standard system}
We will now describe a class of exterior differential system (EDS), for later use, whose integral submanifolds are adapted frames along austere submanifolds.
(Being an integral submanifold of an EDS $\I$ means that the pullback to the submanifold
of any 1-forms in $\I$ is zero.)
To avoid tedious repetition, we will only consider integral submanifolds satisfying the above independence condition.
Unless otherwise stated, we will limit
our attention to austere submanifolds whose effective codimension $r$ equals the normal rank $\k$.

First, suppose we wish to construct an austere submanifold $M$ such that at each point $p$,
$|\II_p|$ is conjugate to a fixed austere subspace $\Q$ of dimension $\k$ (i.e, $M$ is of type $\Q$).
Let symmetric
matrices $\SH^1, \ldots, \SH^\k$ be a fixed basis for this subspace.  Then any such submanifold can be locally equipped with
an adapted frame such that \eqref{noose} holds.  Conversely, if submanifold $\Sigma^4 \subset \F$
is such that
$$\theta^a\restr_\Sigma=0, \qquad (\eta^a_i - \SH^a_{ij} \w^j)\restr_\Sigma = 0,$$
then it is the image of a section of
$\F\restr_M$ for some austere manifold $M$ of type $\Q$.  Thus, we may define on $\F$
a Pfaffian exterior differential system, the {\em standard system},
$$\I = \{ \theta^a, \eta^a_i - \SH^a_{ij}\w^j\}$$
whose integral submanifolds correspond to austere manifolds of this type.

We will also need to consider austere manifolds $M$ where $|\II|$ is conjugate
to an austere subspace $\Q_\lambda$ of fixed dimension $\k$ but which depends on parameters
$\lambda^1, \ldots, \lambda^\ell$ which are allowed to vary along $M$.
Suppose that a basis of this subspace is given by symmetric matrices
$S^1(\lambda), \ldots, S^\k(\lambda)$, and the parameters are allowed to range over an open set $L \subset \R^\ell$.
Then we may define the {\em standard system with parameters}
$$\I = \{ \theta^a, \eta^a_i - S^a_{ij}\w^j\},$$
which is analogous to the above, but now defined on the product $\F \times L$.  Given any austere manifold $M$ of this kind, we may construct an adapted frame along $M$
such that
$$\nu_a \cdot II(e_i,e_j) = S^a_{ij}(\lambda)$$
for functions $\lambda^1, \ldots, \lambda^\ell$ on $M$.  Then the
image of the fibered product of the mappings
$p \mapsto (p, e_i(p), \nu_a(p))$ and $p\mapsto (\lambda^1(p), \ldots, \lambda^\ell(p))$ will be an integral submanifold of $\I$.
Conversely, any integral submanifold of $\I$ satisfying the independence
condition gives (by projecting onto the first factor in $\F\times L$)
a section of $\F\restr_M$ which is an adapted frame for an austere manifold $M$.

For later use, we compute the 1-forms of $\I$.  We note that $d\theta^a\equiv 0$ modulo the
1-forms of $\I$, so that the only algebraic generator 2-forms are obtained from differentiating
the 1-forms $\theta^a_i := \eta^a_i - S^a_{ij}\w^j$.  Using \eqref{blockstreqs} and \eqref{dconn},
we obtain
\begin{equation}\label{gen2forms}
d\theta^a_i \equiv -(dS^a_{ij}-S^a_{kj}\phi_i^k-S^a_{ik}\phi_j^k+\kappa_b^aS^b_{ij})\wedge\omega^j
\end{equation}
modulo $\theta^a$ and $\theta^a_i$.  The 2-forms for the standard system without parameters are obtained
replacing $S^a$ in \eqref{gen2forms} with a constant $\SH^a$, for which $d\SH^a_{ij}=0$.

\section{Austere Submanifolds of Type A}\label{Asection}
In this section we classify maximal austere submanifolds of type A.
Of course, holomorphic submanifolds are of this type, but we
will show that general type A austere 4-folds are much more plentiful
than holomorphic
submanifolds.   Before specializing to submanifolds of maximal type A, we
will first characterize those type A
submanifolds for which the metric is K\"ahler.

\subsection{Real K\"ahler Submanifolds}\label{typeAkahler}
It was shown by Dajczer and Gromoll \cite{DG} that if a submanifold in
Euclidean space admits an almost complex structure for which the metric
inherited from the ambient space is K\"ahler, then the submanifold is austere.
(They termed these `real K\"ahler' submanifolds.)
As stated in \S\ref{codsec}, the maximal austere space $\Q_A$ may be characterized
as the set of symmetric matrices which anti-commute with a complex structure
on $\R^4$ represented by the matrix $J$ given by \eqref{defofJ}.  Thus,
the subgroup $U(2)^\R \subset GL(4,\R)$ of matrices that commute with $J$ preserve $\Q_A$.

 In general, we can
associate a well-defined almost complex structure\footnote{In fact, for a generic two-dimensional
 subspace $\Q \subset \Q_A$ the complex structure which anticommutes with matrices in $\Q$
 is uniquely defined up to a minus sign.} to type A austere 4-folds $M$.
Thus, it is natural to ask under what circumstances the metric on $M$ is K\"ahler.
Below, we will give a partial converse to Dajczer and Gromoll's result.  In order
to state our result precisely, we will need some algebraic preliminaries.

We split the space $\so(4)$ of skew-symmetric matrices as
$$\so(4) = \utwo^\R \oplus \calP,$$
where $\utwo^\R$ is the subspace of matrices that commute with $J$ (which
is isomorphic to the Lie algebra $\utwo$) and $\calP$ is the subspace of matrices which anticommute with $J$,
which is spanned by the matrices
$$
T = \begin{bmatrix}
0 & -1 & 0 & 0 \\
1 & 0 & 0 & 0 \\
0 & 0 & 0 & 1 \\
0 & 0 & -1 & 0
\end{bmatrix}, \quad
U = -J T = \begin{bmatrix}
0 & 0 & 0 & 1 \\
0 & 0 & -1 & 0 \\
0 & 1&  0 & 0 \\
-1 & 0 & 0 & 0
\end{bmatrix}.
$$

\begin{prop}\label{Kprop1}  Let $M$ be an austere 4-fold of type A, and for $p \in M$
let $|\II_p|$ be $O(4)$-conjugate to a subspace $\Q(p) \subset \Q_A$.  Define the map
$$\rK: S \mapsto \left([S,T] | [S,U]\right)$$
from the space of $4\times 4$ matrices $S$ into the space of $4\times 8$ matrices.
Suppose that for every $p$ the image of $\Q(p)$ under $\rK$
has the property that the common nullspace of all matrices in the image is zero.
Then $M$ is K\"ahler with respect to the
complex structure defined by $J$.
\end{prop}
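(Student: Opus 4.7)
The plan is to show that the $\calP$-valued part of the Levi-Civita connection 1-form on $M$ vanishes; this is equivalent to $\rJ$ being parallel, which is the K\"ahler condition.

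First, I would adapt a frame along $M$ so that $\rJ$ is represented by the constant matrix $J$ of \eqref{defofJ} (possible since $M$ is of type A) and the coefficients $S^a$ of the second fundamental form lie in $\Q_A$, so $JS^a+S^aJ=0$. Decompose the tangent connection form as $\phi=\phi_{\utwo}+\phi_{\calP}$ according to $\so(4)=\utwo^{\R}\oplus\calP$; a short computation gives $[\phi_{\utwo},S^a]\in\Q_A$ while $[\phi_{\calP},S^a]\in\Q_A^{\perp}$, the space of $J$-linear symmetric matrices. Applying \eqref{gen2forms} and Cartan's lemma to $(dS^a+[\phi,S^a]+\kappa^a_bS^b)_{ij}\wedge\omega^j=0$ produces a totally symmetric tensor $\Pi^a_{ijk}$ with
\[ dS^a+[\phi,S^a]+\kappa^a_bS^b=\Pi^a_{\cdot\cdot k}\,\omega^k. \]
Projecting onto $\Q_A^{\perp}$ (using that $dS^a$, $[\phi_{\utwo},S^a]$, and $\kappa^a_bS^b$ all take values in $\Q_A$) leaves
\[ [\phi_{\calP},S^a]=P^{\perp}_{\Q}\bigl(\Pi^a(\cdot,\cdot,k)\bigr)\,\omega^k, \]
where $P^{\perp}_{\Q}(M)=\tfrac{1}{2}(M-JMJ)$.

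Next, writing $\phi_{\calP}=T\alpha+U\beta$ with $\alpha=\alpha_k\omega^k$ and $\beta=\beta_k\omega^k$, and using $[\phi_{\calP},S^a]=-[S^a,T]\alpha-[S^a,U]\beta$, contracting both sides with a basis vector $e_j$ gives, for all $a,j,k$,
\[ \rK(S^a)\bigl(\alpha_k e_j,\,\beta_k e_j\bigr)=-P^{\perp}_{\Q}\bigl(\Pi^a(\cdot,\cdot,k)\bigr)\,e_j. \]
The decisive step is to exploit the total symmetry of $\Pi^a_{ijk}$---which is generally broken by the $\Q_A^{\perp}$-projection in the $(i,j)$-slot---to combine these equations over $(j,k)$ so that the right-hand side cancels, producing a relation $\rK(S^a)\,w=0$ for every $a$, where $w\in\R^8$ is canonically built from $\alpha_k$, $\beta_k$, and $J$. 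The nondegeneracy hypothesis on $\rK(\Q(p))$ then forces $w=0$, and hence $\alpha=\beta=0$, giving $\phi_{\calP}=0$. Once $\phi$ takes values in $\utwo^{\R}$, the Levi-Civita connection commutes with $J$, so $\nabla\rJ=0$ and $M$ is K\"ahler.

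The hard part will be this recombination step. Concretely, I would antisymmetrize in $(j,k)$ and use the identity $P^{\perp}_{\Q}(M)(Y,Z)=\tfrac{1}{2}\bigl(M(Y,Z)+M(JY,JZ)\bigr)$; by total symmetry, $\Pi^a_{ijk}-\Pi^a_{ikj}=0$, so the untwisted contribution to the right-hand side cancels, leaving only a $J$-twisted remainder of the form $\tfrac{1}{2}J_{il}(J_{mj}\Pi^a_{lmk}-J_{mk}\Pi^a_{lmj})$. The remaining task, which I expect to be the main technical hurdle, is to rewrite this $J$-twisted remainder in the form $\rK(S^a)\,v$ for a vector $v$ formed from $\alpha$, $\beta$, and $J$ (using total symmetry to freely relabel the $\Pi^a$-indices), at which point the hypothesis is applied directly and the proposition follows.
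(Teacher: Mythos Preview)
Your setup is correct and, in fact, cleaner than the paper's: adapting the frame so that each $S^a\in\Q_A$, decomposing $\phi=\phi_{\utwo}+\phi_{\calP}$, and projecting the Cartan--lemma identity onto $\Q_A^\perp$ to obtain
\[
[S^a,T]_{ij}\,\alpha_k+[S^a,U]_{ij}\,\beta_k \;=\; -\,R^a_{ijk},\qquad R^a_{ijk}:=\tfrac12\bigl(\Pi^a_{ijk}-J_{il}\Pi^a_{lmk}J_{mj}\bigr),
\]
is exactly the right first step.  Where your plan goes wrong is the ``recombination'': you propose to antisymmetrize in $(j,k)$ and then \emph{rewrite} the remaining $J$-twisted piece as $\rK(S^a)\,v$ for some $v$ built from $\alpha,\beta,J$.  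That cannot succeed, because the remainder still carries the full tensor $\Pi^a$, which contains the $\Q_A$-valued part of the covariant derivative of $\II$ --- data independent of $\alpha,\beta$.  No amount of index relabeling via total symmetry will convert $\Pi^a$-terms into $[S^a,T]$- or $[S^a,U]$-terms.

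The fix is much simpler than you anticipate: do not try to rewrite the remainder --- kill it.  Contract your displayed equation with $U_{jk}$.  The untwisted part $\Pi^a_{ijk}U_{jk}$ vanishes since $\Pi^a$ is symmetric and $U$ antisymmetric in $(j,k)$; the $J$-twisted part becomes $J_{il}\Pi^a_{lmk}(JU)_{mk}=J_{il}\Pi^a_{lmk}T_{mk}$, which vanishes for the same reason (using $JU=T$).  Thus $R^a_{ijk}U_{jk}=0$ identically, and the left side gives
\[
\rK(S^a)\begin{pmatrix}U\alpha\\ U\beta\end{pmatrix}=0\quad\text{for every }a,
\]
so the hypothesis yields $U\alpha=U\beta=0$ and hence $\phi_{\calP}=0$.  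This is algebraically equivalent to what the paper does with its $4$-form $\Xi^a_i=\Omega^a_i\wedge(U_{k\ell}\omega^k\wedge\omega^\ell)+J_{mi}\Omega^a_m\wedge(T_{k\ell}\omega^k\wedge\omega^\ell)$: wedging with those particular $2$-forms and using the identity $(-\omega\wedge\transpose{\omega}J+J\omega\wedge\transpose{\omega})\wedge T\omega=0$ is precisely the exterior-algebra version of your $\Q_A^\perp$-projection followed by contraction with $U$.  Your route is more transparent; you just need the right contraction instead of the rewriting step.
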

\begin{proof}
Let $\SH^1, \ldots \SH^6$ be a fixed basis for $\Q_A$.  Let $r$ be the effective codimension of $M$,
not assumed to be the same as the normal rank of $M$.
Locally on $M$, we may
construct an adapted frame $e_1,\ldots, e_4, \nu_1, \ldots, \nu_r$ such that
$$\nu_a \cdot \II(e_i,e_j) = v^a_h \SH^h_{ij},$$
where $v^a_h$ are some functions on $M$ and $1\le h\le 6$.
Then the adapted frame defines a local section $f:M \to \F$ such that $f^* \w^i$ span the cotangent space of $M$
and the image of $f$ is an integral of the 1-forms $\theta^a$ and
$$\theta^a_i := \eta^a_i - v^a_h \SH^h_{ij}\w^j.$$

By specializing the computation \eqref{gen2forms} to the case where
$S^a_{ij}=v^a_h \SH^h_{ij}$, we obtain $d\theta^a_i \equiv -\Omega^a_i$ modulo the forms
$\theta^a_i$, where
\begin{equation}
\Omega^a_i :=\left((d v^a_h +\kappa^a_b v^b_h) \SH^h_{ij}-v^a_h [\SH^h,\phi]_{ij}\right) \&  \w^j
\label{minusOmega}
\end{equation}
and $[\SH^h,\phi]$ denotes the commutator.
These 2-forms must vanish under pullback via $f$.  Consider the 4-forms
$$\Xi^a_i := \Omega^a_i \& U_{k\ell}\, \w^k \& \w^\ell
+ \Omega^a_m \& J_{mi} T_{k\ell}\,\w^k \& \w^\ell.$$
Using the fact that $U=-J T$, we can expand these as
\begin{multline*}\Xi^a_i = -(d v^a_h +\kappa^a_b v^b_h) \&
(\SH^h_{ij}  (J T)_{k\ell}+J_{im}\SH^h_{mj} T_{k\ell} )\, \w^j \& \w^k \& \w^\ell\\
+v^a_h ([\SH^h,\phi]_{ij} (J T)_{k\ell}+J_{im}[\SH^h,\phi]_{mj}T_{k\ell}) \& \w^j \& \w^k \& \w^\ell.
\end{multline*}
Next, write $\phi = \phit + \psi$, where $\phit$ takes value in $\utwo^\R$ and
$\psi$ takes value in $\mathcal P$.  Using the fact that the matrices
$\SH^h$ and $[\SH^h,\phit]$ anticommute with $J$ while $[\SH^h,\psi]$ commutes with $J$, we have
\begin{multline*}
\Xi^a_i = \left(
(d v^a_h +\kappa^a_b v^b_h) \& \SH^h_{ij} (-\omega \& \transpose{\omega} J+J\omega \& \transpose{\omega})_{jk}
\right.
\\
+\left.v^a_h([\SH^h,\phit]_{ij} \& (-\omega \& \transpose{\omega} J+J\omega \& \transpose{\omega})_{jk}
+[\SH^h,\psi]_{ij} \& (\omega \& \transpose{\omega} J+J\omega \& \transpose{\omega})_{jk})\right)
\& T_{k\ell}\, \w^\ell.
\end{multline*}
It is easy to verify that $(-\omega \& \transpose{\omega} J+J\omega \& \transpose{\omega})\& T\omega=0$.
Computing the remaining terms gives
$$\Xi^a_i = -4 v^a_h [\SH^h,\psi]_{ij}U_{jk} \& \w_{(k)},
$$
where $\w_{(k)}$ denotes the 3-form which is the wedge product of the $\w^i$ such that
$\w^j \& \w_{(k)} = \delta^j_k\, \w^1 \& \w^2 \& \w^3 \& \w^4$.

Write $\psi = \psi_1 T + \psi_2 U$, where $\psi_1 = \tfrac12(\phi^2_1 - \phi^4_3)$
and $\psi_2=\tfrac12(\phi^3_2 - \phi^4_1)$.  Suppose that $f^*\psi_1 = a_k f^*\w^k$
and $f^*\psi_2 = b_k f^* \w^k$.  Then the vanishing of $\Xi^a_i$ and the
fact that $f^*(\w^1 \& \w^2\& \w^3 \& \w^4) \ne 0$, implies that the $a_k$ and $b_k$
must satisfy
$$v^a_h ([\SH^h, T]_{ij}U_{jk}a_k + [\SH^h,U]_{ij} U_{jk}b_k)=0.$$
Thus, if $\Q(p)$ satisfies the given conditions, then $a_k$ and $b_k$ must vanish.
Therefore, the connection forms $\phi$ of $M$ take value in $\utwo^\R$,
the connected component of the holonomy group of $M$ lies in $U(2)^\R$,
and it follows that $M$ is K\"ahler.
\end{proof}

Note that the vanishing of $\psi$ implies the vanishing of additional polynomials in
 the coefficients $v^a_h$.  To express these, introduce the
notation $\{ \}_\calP$ for the projection of an $\so(4)$-valued function
(or differential form)  into the subspace $\calP$.  By \eqref{dconn},
$$d\psi \equiv \{\transpose{\eta} \wedge g\eta\}_\calP\tmod \psi.$$
Furthermore, the $(i,j)$ entry of the matrix within braces is congruent,
modulo the $\theta^a_i$, to the 2-form $Y^i_{jk\ell}\w^k \& \w^\ell,$
where
$$Y^i_{jk\ell}:= g_{ab}v^a_h v^b_{h'} (\SH^h_{ik}  \SH^{h'}_{j\ell}-\SH^h_{i\ell}  \SH^{h'}_{jk}).$$
Then these additional conditions take the form
\begin{equation}\label{Kahlerintconds}
Y^2_{1k\ell} = Y^4_{3k\ell}, \qquad Y^3_{2k\ell} = Y^4_{1k\ell}
\end{equation}
for all $k <\ell$.

\begin{prop}\label{Kprop2} The only 2-dimensional subspaces of $\Q_A$ which do not satisfy
the hypothesis of Prop. \ref{Kprop1} are conjugate, via the action of $U(2)^\R$, to
the following:
$$\Q_x = \{ S_x, \Jt S_x \}, \quad \text{where} \
S_x = \begin{bmatrix} 1 & 0 & 0 & 0 \\ 0 & x & 0 & 0 \\ 0 & 0 & -1 & 0 \\ 0 & 0 & 0 & -x\end{bmatrix}, \quad \Jt =
\begin{bmatrix} 0 & 0 & 1 & 0 \\ 0 & 0 & 0 & -1 \\ 1 & 0 & 0 & 0 \\ 0 & -1 & 0 & 0\end{bmatrix}
$$
and $x$ is a real parameter.
\end{prop}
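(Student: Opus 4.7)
The plan is to translate the failure condition into a concrete linear-algebra problem and then exploit $U(2)^\R$-equivariance to bring the data into a normal form from which $\Q_x$ can be read off. By definition, a 2-plane $\Q \subset \Q_A$ fails the hypothesis of Proposition~\ref{Kprop1} precisely when there exists a nonzero $(V, W) \in \R^4 \oplus \R^4$ with $[S, T]V + [S, U]W = 0$ for every $S \in \Q$. Writing $S = v_h \SH^h$ in a fixed basis $\{\SH^h\}$ of $\Q_A$, this becomes a system of four scalar equations linear in the six coefficients $v_h$, whose $4 \times 6$ coefficient matrix $M(V,W)$ depends bilinearly on $(V,W)$; the requirement is that $\Q \subset \ker M(V,W)$ for some nonzero $(V, W)$.

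Next I would use $U(2)^\R$-equivariance to normalize $(V, W)$. Because $U(2)^\R$ acts on $\Q_A$ by conjugation, on $\R^4 \cong \C^2$ as the standard representation of $U(2)$, and on $\calP = \mathrm{span}\{T,U\}$ by a doubled-angle rotation (as already exploited in the proof of Proposition~\ref{Kprop1}), the condition on the triple $(\Q, V, W)$ is equivariant. The transitivity of $U(2)$ on nonzero vectors of $\C^2$ lets one reduce to $V = \lambda e_1$ with $\lambda \ge 0$; then the residual $S^1$-stabilizer of $e_1$, combined with the induced rotation of $(T, U)$, places $W$ in a short list of canonical forms. The cases $V = 0$ and $V \ne 0$ need to be handled separately.

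For each normalized pair $(V,W)$, I would compute $\ker M(V,W)$ directly. For generic $(V,W)$ the kernel is two-dimensional, and an explicit change of basis then exhibits it as $O(4)$-conjugate to $\Q_x$ for the value of $x$ read off from the normalized $W$. The hard part will be the degenerate strata where $\dim \ker M(V,W) \ge 3$: there the choice of 2-plane inside the kernel introduces auxiliary parameters, and one must verify both that every such 2-plane still lies in the $U(2)^\R$-orbit of some $\Q_x$ and that distinct values of $x$ yield $U(2)^\R$-inequivalent 2-planes. An organizing observation is the identification $\Q_A \cong \mathrm{Sym}_2(\C)$ (complex symmetric $2 \times 2$ matrices with $U(2)$ acting by $C \mapsto g C g^{T}$): in this picture $\Q_x$ is spanned by matrices related to Takagi-normal form $\mathrm{diag}(1,x)$, and $x$ appears as an intrinsic singular-value ratio that cannot be removed by further normalization.
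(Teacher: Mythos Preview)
Your approach is correct in outline but dual to the paper's, and considerably more laborious. You fix and normalize the common null vector $(V,W)\in\R^8$ first and then search for the 2-plane $\Q\subset\ker M(V,W)$; the paper instead fixes and normalizes the 2-plane first and then looks for the null vector. Concretely, the paper takes any nonzero $S\in\Q$ and uses the Takagi factorization---exactly the identification $\Q_A\cong\operatorname{Sym}_2(\C)$ you mention at the end---to put it in the form $S_x$. Since $\rK(S_x)$ then has an explicit 4-dimensional kernel in $\R^8$, the remaining condition is that a second, linearly independent $S\in\Q_A$ have $\rK(S)\restr_{\ker\rK(S_x)}$ singular; a direct calculation shows this forces $S$ to be a multiple of $\Jt S_x$. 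No case analysis, no degenerate strata, no residual stabilizer bookkeeping.

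What your route buys is a conceptual picture of the failure locus as a family parametrized by $(V,W)$, but the price is the stratification you flag as ``the hard part'': since $M(V,W)$ is $4\times 6$, its kernel is always at least 2-dimensional, and you must show that \emph{every} 2-plane arising this way, over \emph{every} stratum of $(V,W)$, is $U(2)^\R$-conjugate to some $\Q_x$---a nontrivial orbit identification under only the residual stabilizer of the normalized $(V,W)$. By contrast, the paper's normalization of $\Q$ uses the full $U(2)^\R$ up front and leaves only a one-parameter linear problem. Your ``organizing observation'' about Takagi form is not a coda; it is the whole proof if applied at the outset. (Minor point: the equivariance you invoke is real, but it is not ``already exploited in the proof of Proposition~\ref{Kprop1}''; that proof uses the splitting $\so(4)=\utwo^\R\oplus\calP$, not the $U(2)^\R$-action on $\calP$.)
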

\begin{proof} Any nonzero matrix in $\Q_A$ can be diagonalized using $U(2)^\R$,
and scaled to be equal to $S_x$ for some $x \in \R$.  The kernel of $\rK(S_x)$
is a 4-dimensional subspace of $\R^8$.  Requiring that a general element $S\in \Q_A$, linearly independent
from $S_x$, has the property that the restriction of $\rK(S)$ to $\ker \rK(S_x)$
is singular, implies that $S$ must be a multiple of $\Jt S_x$.
\end{proof}

\begin{cor}\label{Kcorollary}
All type A submanifolds with $\k \ge 2$ are K\"ahler.
\end{cor}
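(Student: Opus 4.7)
The plan is to combine Propositions \ref{Kprop1} and \ref{Kprop2} and then handle the single residual case with the supplementary integrability conditions \eqref{Kahlerintconds}. Since $\k \ge 2$, the space $|\II_p|$ has dimension at least two. If $|\II_p|$ contains any 2-plane $\Q'$ that is not $U(2)^\R$-conjugate to one of the exceptional subspaces $\Q_x$, then Proposition \ref{Kprop2} tells us that $\rK(\Q')$ has trivial common nullspace, hence so does $\rK(|\II_p|)$ (since the latter contains the former), and Proposition \ref{Kprop1} yields the conclusion that $M$ is K\"ahler.

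When $\k\ge 3$, I expect to dispense with the exceptional configurations directly. By the first sentence of the proof of Proposition \ref{Kprop2}, any nonzero $S_1 \in |\II_p|$ can be brought to the form $S_x$ by $U(2)^\R$. Since $\dim|\II_p| \ge 3$, there exists $S_2 \in |\II_p|$ linearly independent from $S_1$ and $\Jt S_1$; the contrapositive of the second sentence of the same proof then gives $\ker \rK(S_1) \cap \ker \rK(S_2) = 0$, so the 2-plane spanned by $S_1$ and $S_2$ is non-exceptional, and the previous paragraph applies.

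The only remaining case is $\k = 2$ with $|\II_p|$ itself $U(2)^\R$-conjugate to some $\Q_x$. Here I would return to the proof of Proposition \ref{Kprop1}, in which the vanishing of the 4-forms $\Xi^a_i$ leaves a residual 1-dimensional direction in $(a_k, b_k)$-space, corresponding to the common nullspace identified in Proposition \ref{Kprop2}. Substituting $\SH^1 = S_x$ and $\SH^2 = \Jt S_x$ into the tensors $Y^i_{jk\ell}$ and evaluating the auxiliary conditions \eqref{Kahlerintconds} should yield a further linear relation that cuts out this residual direction, forcing $\psi \equiv 0$ and hence K\"ahlerness.

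The main obstacle is the $\k=2$ exceptional case: one must verify explicitly, by the substitution above, that \eqref{Kahlerintconds} is not identically satisfied on $\Q_x$ but genuinely constrains the direction left free by the $\Xi^a_i$ analysis.
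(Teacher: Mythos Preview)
Your reduction for $\k\ge 3$ is fine and agrees with the paper's logic. The gap is in the $\k=2$ exceptional case, and your proposed fix will not work: the conditions \eqref{Kahlerintconds} are \emph{consequences} of $\psi=0$, not tools to establish it. The paper derives them from $d\psi \equiv \{\transpose{\eta}\wedge g\eta\}_{\calP}\ \text{mod}\ \psi$, i.e., as compatibility conditions that must hold \emph{once} the $\calP$-part of the connection already vanishes. So substituting $\SH^1=S_x$, $\SH^2=\Jt S_x$ into the $Y^i_{jk\ell}$ and invoking \eqref{Kahlerintconds} assumes the conclusion you are trying to prove; it cannot kill the residual direction left over from the $\Xi^a_i$ analysis.

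The paper's argument for the exceptional case is different and short. It observes that every matrix in $\Q_x$ anticommutes with the \emph{other} complex structure $\Jt$ appearing in Proposition~\ref{Kprop2}, and that $\Jt=PJP^{-1}$ for a permutation matrix $P$. One then reruns the entire proof of Proposition~\ref{Kprop1} with all $\so(4)$-matrices conjugated by $P$, i.e., with $\Jt$ playing the role of $J$. The analogue of the map $\rK$ built from commutators with $P T P^{-1}$ and $P U P^{-1}$ now has trivial common nullspace on $\Q_x$, so the conjugated $\psi$ vanishes and $M$ is K\"ahler with respect to $\Jt$. Note that in this case the K\"ahler structure is the one associated to $\Jt$, not $J$; your attempt was trying to force K\"ahlerness for $J$, which is why the argument stalls.
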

\begin{proof} Because of Prop. \ref{Kprop1}, we need only check those
submanifolds such that $|\II_p|$ is at every point conjugate to a space of the form
$\Q_x$ defined in Prop. \ref{Kprop2}.  Note that matrices in $\Q_x$
anticommute with $\Jt$, and $\Jt = P J P^{-1}$ for a permutation matrix $P$.  Thus, we may repeat the argument of
the proof of Prop. \ref{Kprop1} with all matrices in $\so(4)$ replaced by their conjugates under $P$.
We conclude that the submanifold is K\"ahler with respect to the complex structure defined by $\Jt$.
\end{proof}

\subsection{Maximal Type A}\label{maxtypeA}
In the rest of this section, we discuss austere submanifolds of type A
with $\k=6$, i.e., whose second fundamental forms span the entire
space $\Q_A$.  We fix the following basis for this space:
\begin{align*}
\SH^1&=  \begin{bmatrix}
1 & 0 & 0 & 0 \\
0 & 0 & 0 & 0 \\
0 & 0 & -1 & 0 \\
0 & 0 & 0 & 0 \end{bmatrix}, &
\SH^2 &= \begin{bmatrix}
0 & 1 & 0 & 0 \\
1 & 0 & 0 & 0 \\
0 & 0&  0 & -1 \\
0 & 0 & -1 & 0
\end{bmatrix}, &
\SH^3 &= \begin{bmatrix}
0 & 0 & 0 & 0 \\
0 & 1 & 0 & 0 \\
0 & 0 & 0 & 0 \\
0 & 0 & 0 & -1
\end{bmatrix}, \\
\SH^4&=  \begin{bmatrix}
0 & 0 & 1 & 0 \\
0 & 0 & 0 & 0 \\
1 & 0 & 0 & 0 \\
0 &0& 0 & 0 \end{bmatrix}, &
\SH^5 &= \begin{bmatrix}
0 & 0 & 0 & 1 \\
0 & 0 & 1 & 0 \\
0 & 1&  0 & 0 \\
1 & 0 & 0 & 0
\end{bmatrix}, &
\SH^6 &= \begin{bmatrix}
0 & 0 & 0 & 0 \\
0 & 0 & 0 & 1 \\
0 & 0 & 0 & 0 \\
0 & 1 & 0 & 0
\end{bmatrix}, \\
\end{align*}

Let $M$ be an austere submanifold of this type; for the sake of simplicity, we
first assume that $\cod M = \k=6$, i.e., $M$ lies in $\R^{10}$.
Let $\F$ be the bundle of semi-orthonormal frames on $\R^{10}$,
as described \S\ref{standardsec}.
The manifold $\F$ has dimension $10+24+36+6$; referring to
the structure equations \eqref{blockstreqs}, we note that
the components of $\w$, $\theta$,
$\eta$, $\kappa$, and the lower triangle of $\phi$ give a coframe on $\F$.

By hypothesis, along $M$
there is a moving frame $(e_1,e_2,e_3,e_4,\nu_1, \ldots, \nu_6)$
such that
\begin{equation}\label{shatnu}
\nu_a \cdot \II(e_i,e_j) = \SH^a_{ij}.
\end{equation}
(Here, we take the convention that indices $i,j,k$ run between 1 and 4, while indices
$a,b$ run between 1 and $\k$.)  The image $\Sigma \subset \F$ is an integral
of the standard system $\I$ defined in \S\ref{standardsec}.
By \eqref{gen2forms}, we see that the 2-forms of $\I$ are given by $\pi^a_{ij}\&\w^j$ where
$$\pi^a_{ij} := \kappa^a_b \SH^b_{ij} - \SH^a_{ik}\phi^k_j - \SH^a_{jk} \phi^k_i.$$

Applying Propositions \ref{Kprop1} and \ref{Kprop2},
we see that $M$ must be K\"ahler; in particular,
the differential forms $\psi_1= \frac12(\phi^2_1 - \phi^4_3)$ and $\psi_2 = \frac12(\phi^3_2 - \phi^4_1)$
must also vanish along $\Sigma$.  Therefore, an adapted frame
along an austere 4-fold of this type will be an integral of the
augmented Pfaffian system
$$\I^+ =\{ \theta^a, \eta^a_i - \SH^a_{ij} \w^j, \psi_1, \psi_2 \}.$$
With the addition of the 1-forms $\psi_1,\psi_2$ come the additional integrability conditions \eqref{Kahlerintconds}.  In this case, these
generate only two linearly independent conditions,
\begin{equation}\label{gk6conds}
g_{13}-g_{22}-g_{46}+g_{55}=0, \qquad g_{16}-2g_{25}+g_{34}=0.
\end{equation}
Since these conditions are constraints on how the normal vectors $\nu_a$
may be arranged, they hold only on a codimension-two submanifold of the frame bundle
$\F$.  Let $\F'\subset \F$ denote this submanifold.  We now apply Cartan's
test for involutivity to the pullback of the Pfaffian system $\I^+$ to
$\F'$.

\begin{prop}\label{charA} On $\F'$, the system $\I^+$ is involutive with Cartan
characters $s_1=24$, $s_2=10$.
\end{prop}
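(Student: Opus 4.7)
The plan is to apply Cartan's test for involutivity to the Pfaffian system $\I^+$ on $\F'$, by first computing the 2-form generators, then analyzing the tableau, and finally verifying the character count. The first step is to compute $d\theta^a$, $d\theta^a_i$, $d\psi_1$, $d\psi_2$ modulo $\I^+$. Specializing \eqref{gen2forms} to the case $S^a = \SH^a$ constant gives $d\theta^a_i \equiv -\pi^a_{ij}\wedge\w^j$ with the $\pi^a_{ij}$ of the text and no torsion, while $d\theta^a \equiv 0$. For $d\psi_1$ and $d\psi_2$, the structure equations \eqref{dconn} produce a $\phi\wedge\phi$ part together with an $\eta\wedge g\eta$ torsion; mod $\I^+$ the latter becomes a pure $\w\wedge\w$ expression involving $g_{ab}\SH^a_{ik}\SH^b_{j\ell}$ contracted according to $\psi_1 = \tfrac12(\phi^2_1-\phi^4_3)$ and $\psi_2 = \tfrac12(\phi^3_2-\phi^4_1)$. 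I would verify that the resulting torsion coefficients are precisely the linear combinations appearing in \eqref{gk6conds}, so that the torsion vanishes on $\F'$ and on $\F'$ alone; this is consistent with the derivation of those conditions in \S\ref{typeAkahler}.

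Next I would analyze the tableau. Rewriting $\pi^a_{ij} = \kappa^a_b\SH^b_{ij} - [\SH^a,\phi]_{ij}$ and using $\psi_1\equiv\psi_2\equiv 0 \pmod{\I^+}$ so that only the $\utwo^\R$-component $\phi_1$ of $\phi$ survives, the computations in \S\ref{typeAkahler} show that each $\pi^a$, viewed as a symmetric 2-tensor in $(i,j)$, takes values in $\Q_A$. On a 4-dimensional integral element satisfying the independence condition, the symmetry of $\pi^a_{ij}$ in $(i,j)$ together with the vanishing of $\pi^a_{ij}\wedge\w^j$ forces the expansion $\pi^a_{ij} = P^a_{ijk}\w^k$ with $P^a_{ijk}$ totally symmetric in $(i,j,k)$; and the $\Q_A$-valued condition forces each $P^a$ to lie in the 8-dimensional prolongation $\Q_A^{(1)}$ described before Prop.~\ref{BCcodimension}, i.e., the space of real $J$-linear cubics in the tangent variables.

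Third, I would compute the Cartan characters via the reduced-character formulation. Choose a generic flag $0\subset E_1\subset E_2\subset E_3\subset E_4$ aligned with $\w^1,\ldots,\w^4$; for each $k$, set $c_k$ equal to the codimension in the tableau of 1-forms that vanish on $E_k$, and extract $s'_k = c_k - c_{k-1}$. I expect this computation, taking into account the $\Q_A$-valuedness of each $\pi^a$, the two relations from $d\psi_1,d\psi_2$ after their torsion has been killed on $\F'$, and the two constraints defining $\F'$ itself, to yield $s_1 = 24$, $s_2 = 10$. Cartan's test is then verified by showing that the dimension of the space of integral 4-plane extensions of a generic ordinary integral 3-plane equals $s_1 + 2s_2 + \cdots$, i.e., matches the sum dictated by the characters.

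The main obstacle I expect is the careful bookkeeping in the character computation: one must simultaneously track the totally-symmetric $\Q_A^{(1)}$-valued part $P^a_{ijk}$, the two relations contributed by $d\psi_1,d\psi_2$ at the 2-form level, and the two $\F'$-constraints on the normal-bundle metric pieces of $\kappa$, and then check that these interact transversally so as to produce the stated characters rather than forcing a further reduction of $\F'$ or a prolongation of $\I^+$. A secondary technical point is the explicit verification of torsion cancellation on $\F'$, which requires expanding $g_{ab}\SH^a_{ik}\SH^b_{j\ell}\wedge$ the appropriate combinations of $\w$'s for the $\psi_1$ and $\psi_2$ components and recognizing \eqref{gk6conds} as the exact annihilator.
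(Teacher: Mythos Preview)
Your approach is essentially the paper's, and the outline is sound; two points of bookkeeping need sharpening before the argument closes.

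First, $d\psi_1$ and $d\psi_2$ contribute \emph{only} torsion modulo the 1-forms of $\I^+$: because $\utwo^\R$ is a subalgebra and $[\calP,\calP]\subset\utwo^\R$, the $\calP$-component of $\phi\wedge\phi$ vanishes modulo $\psi_1,\psi_2$, so what remains is the pure $\w\wedge\w$ expression in the $g_{ab}\SH^a\SH^b$. There are therefore no ``two relations from $d\psi_1,d\psi_2$'' at the tableau level. The two relations that reduce the 36 independent $\pit^a_{ij}$ to 34 on $\F'$ come instead from differentiating the defining equations \eqref{gk6conds}, which on $\F'$ express (for instance) $\pit^4_{22}$ and $\pit^4_{24}$ in terms of the others. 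The character count then follows from the $\Q_A$-row structure: for each $a$ the four entries $\pit^a_{1j}$ are independent (giving $s_1=6\cdot 4=24$), while among $\pit^a_{2j}$ only two new entries appear per $a$; the two $\F'$-relations hit precisely this second row, so $s_2=6\cdot 2-2=10$.

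Second, to close Cartan's test you must cut the 48-dimensional space of $\Q_A^{(1)}$-valued $p^a_{ijk}$ down to 44. This is done by requiring the integral 4-plane to be tangent to $\F'$: pulling the two differentials of \eqref{gk6conds} back to the integral element and substituting $\pit^a_{ij}=p^a_{ijk}\w^k$ imposes four additional independent linear conditions on the $p^a_{ijk}$. Without this step you would get $48\neq s_1+2s_2=44$ and wrongly conclude non-involutivity, so it is essential to include it explicitly.
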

\begin{proof}  As in the proof of Prop. \ref{Kprop1},
let $\phit$ be the projection of $\phi$ into $\utwo^\R$.
Then
$$d\theta^a_i \equiv \pit^a_{ij}\&\w^j \mod \theta^a, \theta^a_i,\psi_1, \psi_2,$$
where we define
$$\pit^a_{ij} := \kappa^a_b \SH^b_{ij} - \SH^a_{ik}\phit^k_j - \SH^a_{jk} \phit^k_i.$$
Next, let $\pit^a$ stand for the matrix-valued 1-form whose entries are $\pit^a_{ij}$.  On $\F$, the 36 1-forms $\kappa^a_b$ are linearly independent.  Because, for each $a$,  $\pit^a$ takes value
in the 6-dimensional space $\Q_A$, it follows that on $\F$ there are exactly
36 linearly independent forms among the $\pit^a_{ij}$.  Differentiating
\eqref{gk6conds} shows that two of these forms pull back to $\F'$ to be linearly
dependent on the others; for example, one can solve for
$\pi^4_{22}$ and $\pi^4_{24}$ in terms of the other $\pi^a_{ij}$.
It follows that, when pulled back to $\F'$, there are 24 linearly independent 1-forms
among the $\pi^a_{1j}$ and 10 further independent 1-forms among the $\pi^a_{2j}$.   This gives us the claimed values for the Cartan characters.

To apply Cartan's test, we need to calculate the fiber dimension of
the space of 4-dimensional integral elements (satisfying the independence condition) at points on $\F'$.
Suppose that such an integral element is defined by
$$\pit^a_{ij} = p^a_{ijk} \w^k$$
where $p^a_{ijk}$ is symmetric in $i,j,k$, and for any fixed $a$ and $k$ is in the space $\Q_A$.  For each $a$, the space of symmetric tensors satisfying
these conditions is isomorphic to the prolongation $\Q_A^{(1)}$, which has dimension 8.
As $a$ varies, we obtain a 48-dimensional
space of solutions $p^a_{ijk}$.  However, the corresponding integral 4-planes
must be tangent to the submanifold $\F'$.  This requirement imposes 4 additional
linearly independent homogeneous conditions on the $p^a_{ijk}$, so we conclude that the fiber dimension of the space of integral elements tangent to $\F'$
is 44.  Since this dimension coincides with $s_1+2s_2$, the system is involutive.
\end{proof}

We now state the following
\begin{theorem} Austere 4-folds in $\R^{10}$ of maximal type A exist and depend locally on a choice of 10 functions of 2 variables.
Each of them carries a complex structure with respect to which the metric inherited from ambient space is K\"ahler, but they are generically
not complex submanifolds.
\end{theorem}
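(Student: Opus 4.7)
The plan is to establish the three parts of the theorem in turn: existence together with the generality count, the Kähler property, and the generic failure to be a complex submanifold. The first two parts follow nearly immediately from the technical results already assembled in this section; the third requires comparing two moduli.

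For existence and generality, I would apply the Cartan-Kähler theorem to the system $\I^+$ on $\F'$. Proposition \ref{charA} established involutivity with Cartan characters $s_1 = 24$ and $s_2 = 10$, and since $s_1 + 2s_2 = 44$ equals the full fiber dimension of the integral-element variety, we have $s_3 = s_4 = 0$. In the real-analytic category, Cartan-Kähler then produces through each generic point of $\F'$ an integral 4-fold $\Sigma$ of $\I^+$, with the general such $\Sigma$ depending on $s_2 = 10$ functions of 2 variables. The projection of $\Sigma$ to $\R^{10}$ under the basepoint map is an austere 4-fold of maximal type A with the claimed generality. The Kähler property then follows immediately from Corollary \ref{Kcorollary}, since $\delta = 6 \ge 2$; indeed, this is precisely what motivated the inclusion of the 1-forms $\psi_1, \psi_2$ in $\I^+$, forcing the connection to take values in $\utwo^\R$.

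To show generic non-holomorphicity, I would compare the moduli of $\I^+$-integrals with that of holomorphic 4-surfaces in $\C^5 \cong \R^{10}$. A holomorphic 4-surface is locally the graph of three holomorphic functions of two complex variables; each such function is determined near the origin by its restriction to a real 2-plane $\R^2 \subset \C^2$, which amounts to two real-analytic functions of two real variables. Thus the family of local holomorphic 4-surfaces depends (modulo rigid motion) on only 6 functions of 2 variables, strictly less than $s_2 = 10$. Since every holomorphic 4-surface automatically satisfies $\I^+$, the holomorphic locus is a proper subfamily of the space of $\I^+$-integrals, and the generic such integral must fail to be holomorphic. The main obstacle is to make this moduli comparison rigorous within the Cartan-Kähler framework; a cleaner route is to construct, on a suitable enlargement of $\F'$, a Pfaffian system $\I_{\mathrm{hol}} \supset \I^+$ whose additional 1-forms encode the condition that the ambient complex structure on $\C^5$ preserves $TM$ and restricts to the intrinsic $J$, then check that $\I_{\mathrm{hol}}$ is involutive with last nonzero Cartan character at most $6$, which confirms the positive codimension of the holomorphic locus inside the moduli of maximal type A austere 4-folds.
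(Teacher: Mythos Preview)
Your argument is correct and follows the paper's approach closely for the first two assertions: Cartan--K\"ahler applied via Proposition~\ref{charA} gives existence with $s_2=10$, and the K\"ahler property is exactly Corollary~\ref{Kcorollary} (the paper cites Proposition~\ref{Kprop1} directly, but this is the same content). For the third assertion, both you and the paper begin with the same moduli count, $6<10$, for holomorphic surfaces in $\C^5$.

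The only substantive difference is in how the moduli comparison is made rigorous. You propose enlarging $\I^+$ to a system $\I_{\mathrm{hol}}$ encoding holomorphicity and checking its characters; this would work but requires setting up and analyzing a new EDS. The paper instead stays with $\I^+$ and derives, directly from $\rJ\nu_a=\nu_{a+3}$ and the structure equations, explicit linear relations $L^a_b\,\pit^b_{ij}=\pit^a_{ik}J^k_j$ that the tableau forms must satisfy along any holomorphic solution. Since involutivity guarantees an integral manifold through \emph{every} admissible integral element, and a generic integral element fails these relations, generic solutions are non-holomorphic. This route is more economical: it avoids building a second system and reduces the question to an open condition on integral elements that is visibly nontrivial.
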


\begin{proof}
The first assertion follows by applying the Cartan-K\"ahler Theorem
(cf. Theorem 7.3.3 in \cite{CFB}), given the fact that the system $\I^+$ is involutive
with characters computed in Prop. \ref{charA}.  The second assertion follows from Prop. \ref{Kprop1}.
The third assertion follows from the fact that
holomorphic submanifolds of real dimension four in $\R^{10}\simeq \C^5$ are locally the graphs of three holomorphic functions of
two complex variables.  The Cauchy-Riemann system for one function of two complex variables is involutive with Cartan
character $s_2=2$, so such submanifolds $M$ depend locally on a choice of 6 functions of 2 real variables.
But it is also instructive to see just how the solutions of the above Pfaffian system fail, in general, to be holomorphic submanifolds.

Suppose that $M^4 \subset \C^5$ is a holomorphic submanifold with normal space of real dimension 6,
and let $\rJ$ denote the ambient complex structure.  Adapt a framing along $M$ so that \eqref{shatnu} holds.
The equation \eqref{nuJnu} implies that $(\rJ \nu_a) \cdot \II(X,Y) = -\nu_a \cdot \II(X,\rJ Y )$.
It follows that the matrix representing $\rJ \nu_a \cdot \II$ must equal minus the product of the
matrix representing $\nu_a \cdot \II$ with $J$.  Multiplying the basis matrices $\SH^1,\ldots, \SH^6$
by $J$ shows that we must have
\begin{equation}\label{Jnoose}
\rJ \nu_1 = \nu_4, \qquad \rJ \nu_2 = \nu_5, \qquad \rJ \nu_3=\nu_6.
\end{equation}
By abuse of notation, we can assume that $\rJ$ is a constant matrix.  Differentiating, for example,
the equation $\rJ \nu_1 = \nu_4$, and using \eqref{denustreqs}, yields
$$\rJ(e_j \xi^j_1 + \nu_a \kappa^a_1) = e_j \xi^j_4 + \nu_a \kappa^a_4.$$
In particular, such framings satisfy $\kappa^1_1 = \kappa^4_4$, $\kappa^2_1=\kappa^5_4$
and $\kappa^3_1 = \kappa^6_4$.  More generally, differentiating the equations \eqref{Jnoose}
shows that the matrix $\kappa$ of 1-forms $\kappa^a_b$ must commute with the matrix
$$L = \begin{bmatrix} 0 & -I_{3\times 3} \\ I_{3\times 3} & 0 \end{bmatrix}.$$
Because $L^a_b \SH^b = \SH^a K$, we must have $\kappa^{a+3}_{b+3} = \kappa^a_b$ and $\kappa^{a+3}_b= -\kappa^a_{b+3}$
for $1 \le a,b\le 3$.  It follows that the 36 1-forms $\pit^a_{ij}$ must satisfy
$$L^a_b \pit^b_{ij} = \pit^a_{ik}K^k_j = \pit^a_{jk}K^k_i.$$
(In this equation, we revert to $1 \le a,b \le 6$.)
Because involutivity implies that integral manifolds may be constructed passing through
any given initial integral element, we see that a generic solution will not satisfy these extra necessary conditions.
\end{proof}

\def\abar{{\overline a}}
\def\bbar{{\overline b }}
As noted in \S\ref{codsec}, space $\Q_A^{(1)}$ has nonzero dimension,
so austere 4-folds of type A with $\k=6$ may in fact have codimension $r>6$.
To see how many of these there are, suppose that along such a submanifold $M$
we adapt moving frames, as in the proof of Prop. \ref{BCcodimension},
so that $\nu_1, \ldots, \nu_6$ span $N^1_pM$,
and $\nu_7,\ldots,\nu_r$ are orthogonal to $T_pM \oplus N^1_pM$.
(As before, let indices $a,b$ run from 1 to 6, but now let indices $\alpha,\beta$ run between $7$ and $r$.)

Such moving frames, as sections of $\F$, give integral submanifolds of
the following Pfaffian system:
$$\I^+ = \{ \theta^a, \theta^\beta, \psi_1, \psi_2, \eta^a_i - \SH^a_{ij}\w^j,
\eta^\beta_i \}.$$
Again, we restrict to the submanifold $\F'$ where the integrability conditions
\eqref{gk6conds} hold.  We compute
$$d\eta^\beta_i \equiv -\kappa^\beta_b \SH^b_{ij} \& \w^j$$
modulo the 1-forms in $\I^+$.  For every fixed index $\beta$,
the tableau component given by $\kappa^\beta_b \SH^b_{ij}$
is isomorphic to $\Q_A$, and is involutive with characters $s_1=4, s_2=2$.
Combining this with the results of Prop. \ref{charA} we conclude that the EDS $\I^+$ is involutive with characters $s_1 = 24+4(r-6)=4r$
and $s_2 = 10+2(r-6)=2r-2$.

We conclude that type A austere 4-folds in $\R^{4+r}$ with maximal first normal space (so that $r\ge 6$)
depend on a choice of $2(r-1)$ functions of 2 variables.  By contrast, when $r$ is
even, holomorphic submanifolds of real dimension 4 depend on $r$ functions of 2 variables.

\section{Maximal Types B and C}\label{BCsection}
\subsection{Submanifolds of Maximal Type B}\label{Bsection}
Let $M$ be an austere submanifold of type $B$ of normal rank $\delta$.
By hypothesis, there is a moving frame $(e_1, e_2, e_3, e_4,\nu_1, \ldots \nu_\delta)$
such that the $e_i$ are orthonormal and tangent to $M$, and
in each normal direction $\nu_a$ the shape operator takes the form
\begin{align}\label{mat}
\nu_a \cdot \II &= \begin{bmatrix} m^a I & B^a \\ \transpose{B^a} & -m^a I \end{bmatrix}
\end{align}
We consider the standard system with parameters
$${\mathcal I}=\{\theta^a,\ \eta^a_i-S^a_{ij}\omega^j\}$$ on ${\mathcal F}\times \R^{5\delta}$,
where ${\mathcal F}$ is the semi-orthonormal frame bundle of $M$ and $\nu_a\cdot II(e_i,e_j)=S^a_{ij}$
have form \eqref{mat}.  (For each $a$, the parameters are the scalar $m^a$ and
the entries of $B^a$.)  The integral submanifolds of this EDS with the  independence condition
$\omega_1\wedge\omega_2\wedge\omega_3\wedge\omega_4\not =0$ correspond to austere submanifolds of type B.
As in \eqref{gen2forms}, we compute the system $2$-forms as
$$d(\eta^a_i-S^a_{ij}\omega^j)\equiv
-(dS^a_{ij}-[S^a,\phi]_{ij}+\kappa_b^a S^b_{ij})\wedge\omega^j$$
modulo the $1$-forms of the ideal ${\mathcal I}$, where $[S^a,\phi]$ denotes the commutator.
Hence the tableau of the system is spanned by the $1$-forms
\begin {align}
\pi_{ij}^a:&=dS^a_{ij}-[S^a,\phi]_{ij}+\kappa_b^a S^b_{ij}\label{pij}
\end{align}

Let $\Omega_i^a=-\pi_{ij}^a\wedge \omega^j$ denote the system 2-forms,
which will also vanish on the integral elements.
The $5$-dimensional space of type B second fundamental forms ${\mathcal Q}_B$
is spanned symmetric matrices that anticommute with the reflection
$$R=\begin{bmatrix}
1 & 0 & 0 & 0 \\
0 & 1 & 0 & 0 \\
0 & 0 & -1 & 0 \\
0 & 0 & 0 &-1\end{bmatrix},$$
along with $R$ itself.
The space $\so(4)$ of skew-symmetric matrices splits as a direct sum of two subspaces
$W_1$ and $W_2$, where
$W_1= \left\{\left[\begin{smallmatrix} A_1 & 0 \\0 & A_2\end{smallmatrix}\right] , A_1, A_2\in \so (2)\right\}$
is the $2$-dimensional subspace of matrices which commute with $R$ and
$W_2=\left\{\left[\begin{smallmatrix}0 & B \\-\transpose{B}&0 \end{smallmatrix}\right], B\in {\mathcal M}_2(\R)\right\}$
is the $4$-dimensional subspace of matrices which anticommute with $R$.
We write the $\so(4)$-valued connection form $\phi$ as $\phi=\phi_0+\psi$,
where $\phi_0$ takes value in $W_1$ and $\psi$ takes value in $W_2$.

If $S\in {\mathcal Q}_B$, then $[S, \phi_0]$ anticommutes with the reflection $R$.
To see this, suppose $S$ belongs to the subspace of matrices in ${\mathcal Q}_B$ that
anticommute with $R$. Then
$$[S,\phi_0]R=S\phi_0R-\phi_0SR=SR\phi_0+\phi_0RS=-RS\phi_0+R\phi_0S=-R[S,\phi_0]$$
On the other hand, if $S$ is a multiple of $R$ we can see easily that $[S, \phi_0]$
again anticommutes with $R$.

The following result gives us the fiber dimension of the set of integral elements
of the Pfaffian system ${\mathcal I}$ with independence condition.
We point out that this result is independent of the normal rank of the submanifold.

\begin{prop}\label{sixteenelts}
The fiber dimension of the set of integral 4-planes (satisfying the
independence condition) of the Pfaffian system ${\mathcal I}$ is 16.
\end{prop}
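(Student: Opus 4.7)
The plan is to compute directly the dimension of the space of integral 4-planes at a generic point of $\mathcal{F}\times\R^{5\delta}$, by parametrizing each plane via the pullbacks of the non-base 1-forms---the $dm^a$ and $dB^a_{kl}$ (comprising $dS^a$), the skew forms $\phi^i_j$, and the normal-connection 1-forms $\kappa^a_b$---as linear combinations of the base forms $\omega^k$. The condition that $\Omega^a_i = -\pi^a_{ij}\wedge\omega^j$ vanishes on the plane then requires $\pi^a_{ij}$, restricted to the plane, to equal $p^a_{ijk}\omega^k$ with $p^a_{ijk}$ fully symmetric in $(i,j,k)$. The key algebraic input is the prolongation result $\Q_B^{(1)}=0$ from Lemma~\ref{BCprolong}.

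I would next exploit the splitting $\so(4) = W_1 \oplus W_2$ by writing $\phi = \phi_0 + \psi$. From the computations noted in the excerpt, together with the direct check that $[R,\psi] = -2\psi R$ (so $\psi R$ is symmetric and anticommutes with $R$, hence lies in $\Q_B$), the tableau form splits as $\pi^a = \pi^a_{\Q_B} + \pi^a_\perp$, where the $\Q_B$-part absorbs all of $dS^a$, $[S^a,\phi_0]$, $\kappa^a_b S^b$, and $m^a[R,\psi]$, while $\pi^a_\perp = -[S'^a,\psi]_\perp$ involves only $\psi$ and the off-diagonal piece $S'^a$ of $S^a$. Since $[S'^a,\psi]$ commutes with $R$, its perp-projection lies in the 4-dimensional block-diagonal traceless subspace $D \subset \Q_B^\perp$.

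The decisive step is to decompose $p^a_{ijk} = q^a_{ijk} + r^a_{ijk}$ into $\Q_B$- and $\perp$-valued parts and apply Lemma~\ref{BCprolong}: the fully symmetric $\Q_B$-valued tensor $q^a$ must vanish. This yields $\pi^a_{\Q_B}$ pulling back to zero on the plane, which for each base direction is a system of $5\delta$ linear equations determining $dS^a$ uniquely in terms of the $\phi_0$, $\psi$, and $\kappa$ coefficients along that direction. The remaining condition---full symmetry of $r^a_{ijk} = -([S'^a,\Psi_k]_\perp)_{ij}$, where $\psi$ restricts to the plane as $\Psi_k\omega^k$---is a linear condition on the $\Psi_k$'s alone (since neither $\phi_0$ nor $\kappa$ enters $\pi^a_\perp$); via the generically invertible map $T^a\colon W_2 \to D$, it becomes the requirement $T^a(\Psi) \in D^{(1)}$, a 4-dimensional subspace of $D\otimes V^*$.

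The main obstacle is the careful final tally: the residual free parameters are $\phi_0$ on the plane (8 components), $\kappa$ on the plane ($4\delta^2$ components, not directly constrained by the perp-symmetry conditions), and the $\Psi_k$'s (16 components reduced by the intersection across all $a$ of the conditions $T^a(\Psi) \in D^{(1)}$). Verifying that these contributions sum to exactly 16 independent of $\delta$ is where the bulk of the work lies, and requires showing that the excess $\kappa$-freedom not absorbed by $\pi^a_{\Q_B}=0$, combined with the $\phi_0$-freedom and the surviving $\Psi$-solutions, conspire through the algebraic structure of the maps $T^a$ and the $\kappa^a_b S^b$ interactions to produce a $\delta$-independent total.
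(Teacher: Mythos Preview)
Your splitting $\pi^a = \pi^a_{\Q_B} + \pi^a_\perp$ via $\phi = \phi_0 + \psi$ and the decomposition $p^a = q^a + r^a$ follow the paper's setup. But the ``decisive step'' contains a genuine error: the $\Q_B$-projection $q^a$ of a fully symmetric tensor $p^a$ is \emph{not} itself fully symmetric in general. The projection onto $\Q_B \subset S^2 V^*$ acts on the first two indices only and does not commute with symmetrization over all three. For instance, take $p_{ijk} = \delta_{i1}\delta_{j1}\delta_{k1}$: then $p_{\cdot\cdot 1}$ is the elementary diagonal matrix with a single $1$ in the $(1,1)$ slot, whose $\Q_B$-projection is $\tfrac14 R$, while $p_{\cdot\cdot 2} = 0$; hence $q_{221} = \tfrac14 \ne 0 = q_{212}$. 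So you cannot invoke $\Q_B^{(1)} = 0$ to force $q^a = 0$, and the downstream claims---that $\pi^a_{\Q_B}$ pulls back to zero on the plane, and that $r^a$ must separately be fully symmetric---do not follow.

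The paper's argument diverges exactly here. It rewrites the full-symmetry condition on $P^a = Q^a + R^a$ as the \emph{inhomogeneous} linear system $Q^a_{ijk} - Q^a_{ikj} = -(R^a_{ijk} - R^a_{ikj})$, with $R^a$ treated as data determined by the 16 parameters of $\psi$ on the plane. The lemma $\Q_B^{(1)} = 0$ then shows the associated homogeneous system has trivial kernel, so $Q^a$ is \emph{uniquely determined} by $R^a$---but is not zero. Your final-paragraph unease about the tally failing to be $\delta$-independent is a symptom of the earlier misstep: by setting $q^a = 0$ you leave $dS^a$ free to absorb all of $\phi_0$ and $\kappa$, while by demanding that $r^a$ be fully symmetric on its own you impose spurious extra conditions on $\Psi$; these two errors do not cancel, and there is no route to the count of 16 without correcting the use of the prolongation lemma.
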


\begin{proof}
The $2$-forms of the differential system ${\mathcal I}$ are given by
$\Omega^a_i:=-\pi_{ij}^a\wedge \omega^j$, where
$$\pi^a:=dS^a-[S^a,\phi]+\kappa_b^a S^b$$  It follows
that $\pi^a_{ij} = P^a_{ijk}\w^k$ on an integral element and the
integral elements are determined uniquely by the values of these $P^a_{ijk}\w^k$ . Then
\begin {align}
P^a_{ijk} &= P^a_{jik} \label{pes}
\end{align}
for every $a,i,j,k$.
Let $S^2 \R^4={\mathcal Q}_B\bigoplus {\mathcal U}$, where ${\mathcal U}$
is the orthogonal complement of ${\mathcal Q}_B$ in $V$ and it is a $5$-dimensional subspace.
Write $P^a_{ijk} = Q^a_{ijk} + R^a_{ijk}$, where $Q^a_{ijk}$ takes value in $\Q_B \otimes V$ and
$R^a_{ijk}$ takes value in $\mathcal U \otimes V$ for every index $a$.
Then the equations \eqref{pes}
are a linear system for the $Q^a_{ijk}$ and $R^a_{ijk}$.

Since $\phi=\phi_0+\psi$ and $[S^a,\phi_0]\in {\mathcal Q}_B$,
it follows that the projection of $\pi^a$ into the space ${\mathcal U}$
is the projection of $[S^a,\psi]$ onto ${\mathcal U}$.
Therefore, $R^a_{ijk}$ is completely determined by the value of
the $W_2$-valued 1-form  $\psi$ on the integral element.  Because $W_2$ is
4-dimensional, this depends on 16 parameters.  Now rewrite \eqref{pes} as a
non-homogeneous linear system for $Q^a_{ijk}$:
\begin{align}\label{qr}
Q^a_{ijk} - Q^a_{jik} &= - R^a_{ijk} + R^a_{jik}
\end{align}
The dimension of the solution space of this system is the same
for any set of values for the $R^a_{ijk}$.  In particular, when $R^a_{ijk}$
is zero, \eqref{qr} implies that $Q^a_{ijk}$ takes value in $\Q_B^{(1)}$, which
by Lemma \ref{BCprolong} is zero-dimensional.  Therefore, the values
of $Q^a_{ijk}$ satisfying \eqref{qr} are uniquely determined by the 16 parameters
that give the $R^a_{ijk}$.

\end{proof}
The next result shows that there are no type B austere $4$-folds
of maximal normal rank.

\begin{prop} A type B austere submanifold $M^4$ cannot have first normal space of dimension $\delta=5$.
\end{prop}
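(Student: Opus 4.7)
My plan is to show that the standard system from Section \ref{standardsec} for maximal type B admits no integral manifolds: I will first argue that $\psi$ must vanish on every integral element, and then combine this with the $W_2$-component of the structure equation for $d\phi$ to derive an algebraic condition on the normal-bundle Gram matrix that cannot be satisfied.

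Setting up, by Proposition \ref{BCcodimension} I may take $M\subset\R^9$; I fix a basis for $\Q_B$ with $\SH^1=R$ and with $\SH^2,\SH^3,\SH^4,\SH^5$ spanning the off-block symmetric part, adapt a semi-orthonormal frame realizing $\nu_a\cdot\II(e_i,e_j)=\SH^a_{ij}$, and decompose $\phi=\phi_0+\psi$ with $\phi_0\in W_1$ and $\psi\in W_2$. The tableau $1$-forms $\pi^a_{ij}=\kappa^a_b\SH^b_{ij}-[\SH^a,\phi]_{ij}$ restrict on any integral element as $\pi^a_{ij}=P^a_{ijk}\w^k$ with $P^a_{ijk}$ fully symmetric. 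For $a=1$ we have $[R,\phi_0]=0$ and $R\psi\in\Q_B$, so $\pi^1$ is $\Q_B$-valued and Lemma \ref{BCprolong} forces $P^1=0$. For $a\ge 2$, decomposing $P^a=Q^a+R^a$ into $\Q_B$- and $\mathcal U$-parts exactly as in the proof of Proposition \ref{sixteenelts} gives $R^a_{ijk}=-[\SH^a,\psi(\partial_k)]_{\mathcal U;ij}$, and the integrability symmetry $R^a_{ijk}=R^a_{ikj}$ becomes a homogeneous linear system on $\psi\in W_2\otimes V^*\cong\R^{16}$. The main computational obstacle is to verify that this system has only the trivial solution; writing out $[\SH^a,\psi]_{\mathcal U}$ in the chosen basis shows that the symmetry conditions for $a=2$ alone annihilate $12$ of the $16$ components of $\psi$, and the conditions for $a=3$ finish the remaining $4$.

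With $\psi\restr_\Sigma=0$ established on any integral submanifold $\Sigma$, the $a=1$ argument applies to each $a$, so every $\pi^a$ is $\Q_B$-valued and $\Q_B^{(1)}=0$ forces $\pi^a\restr_\Sigma=0$. To obtain a contradiction I then invoke secondary integrability. The $W_2$-component of the structure equation \eqref{dconn} reads
\[d\psi=-(\phi_0\wedge\psi+\psi\wedge\phi_0)+(\transpose{\eta}\, g\,\eta)_{W_2},\]
and on $\Sigma$ the first two terms drop out because $\psi\restr_\Sigma=0$, giving $(\transpose{\eta}\, g\,\eta)_{W_2}\restr_\Sigma=0$. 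Substituting $\eta^a_i\restr_\Sigma=\SH^a_{ij}\w^j$ and expanding the $(1,3)$-entry of this $W_2$-valued matrix of $2$-forms, the coefficient of $\w^1\wedge\w^3$ collapses to $-(g_{11}+g_{22})$, so this quantity must vanish. Since $g_{ab}=\nu_a\cdot\nu_b$ is the Gram matrix of a linearly independent set of normals it is positive-definite, and $g_{11}+g_{22}>0$; this contradiction rules out any integral submanifold $\Sigma$, and hence any austere $4$-fold of maximal type B.
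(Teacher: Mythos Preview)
There is a genuine gap in your Step 2, where you assert that the integrability condition gives $R^a_{ijk}=R^a_{ikj}$ as a constraint on $\psi$. The integral element condition $\Omega^a_i=0$ gives full symmetry of $P^a_{ijk}=Q^a_{ijk}+R^a_{ijk}$, but the decomposition into $\Q_B$- and $\mathcal U$-parts is taken with respect to the first two indices only, so swapping $j$ and $k$ does not preserve it. What one actually gets is the coupled equation
\[
Q^a_{ijk}-Q^a_{ikj}=-(R^a_{ijk}-R^a_{ikj}),
\]
which is precisely equation \eqref{qr} in the paper. Proposition \ref{sixteenelts} shows that for \emph{any} value of $\psi\restr_E$ (equivalently, any $R^a$), this system has a unique solution $Q^a$; hence the fibre of integral $4$-planes is $16$-dimensional and $\psi$ is free, not forced to vanish. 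Concretely, the paper's list of additional $1$-forms that do vanish on all integral elements contains items such as $\kappa^5_1+2\phi^3_1$ and $\kappa^5_2+2\phi^4_1$, not $\phi^3_1$ or $\phi^4_1$ by themselves. Your claim ``the symmetry conditions for $a=2$ alone annihilate $12$ of the $16$ components of $\psi$'' is therefore incorrect.

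Because $\psi\restr_\Sigma\ne 0$ in general, your final step also collapses: in the $W_2$-component of \eqref{dconn} the cross terms $\phi_0\wedge\psi+\psi\wedge\phi_0$ survive, so $(\transpose{\eta}\,g\,\eta)_{W_2}\restr_\Sigma$ need not vanish and no contradiction on $g_{11}+g_{22}$ follows. The paper instead augments $\I$ by the eleven correct combinations $\psi_1,\ldots,\psi_{11}$, differentiates those modulo the enlarged ideal, and obtains a quadratic system in the $P^a_{ijk}$ whose elimination yields the impossible relations $g_{11}+g_{55}=g_{22}+g_{44}=0$.
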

\begin{proof}
If $\delta=5$, then at each point the second fundamental form spans all of ${\mathcal Q}_B$.  We can therefore
choose smooth, linearly independent normal vector fields $\nu_1,\ldots,\nu_5$ so that $m^1,\ldots, m^4$ are identically zero, $B^1$ through $B^4$
 are given by
\begin{equation}\label{k4bvalues} B^1 = \begin{bmatrix} 1& 0 \\ 0
& 0\end{bmatrix}, B^2 =
\begin{bmatrix} 0 & 1\\ 0 & 0 \end{bmatrix}, B^3 = \begin{bmatrix}
0 & 0 \\ 1&0 \end{bmatrix}, B^4 = \begin{bmatrix} 0 & 0 \\ 0 &
1\end{bmatrix}.
\end{equation}
 and $m^5=1$, $B^5=0$.  This simplifies the formulas for the system 2-forms considerably and forces some additional $1$-forms, not in the ideal, to vanish on all admissible integral 4-planes.   Such 1-forms may be determined by examining the {\em tableau} of the
Pfaffian system.  For example, the system 2-forms $\Omega_i^5$ may be
 written in matrix-vector form as follows
 $$\begin{bmatrix} \Omega^5_1 \\ \Omega^5_2 \\ \Omega^5_3 \\ \Omega^5_4 \end{bmatrix}=
 \begin{bmatrix}
    \kappa^5_5  & 0     & \kappa^5_1+2\phi^3_1 & \kappa^5_2+2\phi_1^4 \\
    0                        & \kappa^5_5     & \kappa^5_3+2\phi_2^3 & \kappa^5_4 +2\phi^4_2 \\
    \kappa^5_1+2\phi^3_1 & \kappa^5_3+2\phi_2^3      & -\kappa^5_5                 & 0\\
    \kappa^5_2+2\phi_1^4  & \kappa^5_4 +2\phi^4_2& 0 & -\kappa^5_5
 \end{bmatrix} \& \begin{bmatrix} \w^1 \\ \w^2 \\ \w^3 \\ \w^4 \end{bmatrix} $$
This piece of the tableau allows us to deduce that $\kappa^5_5$ must vanish
on any admissible integral 4-plane $E$:  for, the vanishing of the 2-forms $\Omega_1^5$ and $\Omega _2^5$ implies that $\kappa^5_5$ restricts to $E$ to be a linear combination of $\omega^3$ and $\omega^4$, while the vanishing of  $\Omega_3^5$ and $\Omega _4^5$ implies that $\kappa^5_5$ restricts to be a linear combination of $\omega^1$ and $\omega^2$.
Thus,  $\kappa_5^5=0$ on any such integral element.  (See the proof of Prop. \ref{notone}
below for a more subtle example of this kind of calculation.)

In all, the additional 1-forms that vanish on all admissible integral elements are
$$
\begin{aligned}[t]
\psi_1 &= \kappa^5_1 + 2\phi^3_1,\\
\psi_2 &=\kappa^5_2 + 2\phi^4_1,\\
\psi_3 &=\kappa^5_3 + 2\phi^3_2,\\
\psi_4 &= \kappa^5_4 + 2\phi^4_2,\\
\psi_5 &=\kappa^5_5,\end{aligned}\qquad
\begin{aligned}[t]
\psi_6 &= \kappa^1_2 - \kappa^4_3 +2 \phi^4_3,\\
\psi_7 &=\kappa^1_3 -\kappa^4_2 +2\phi^2_1,\\
\psi_8 &= \kappa^2_1 -\kappa^3_4 -2\phi^4_3,\\
\psi_9 &=\kappa^3_1 - \kappa^2_4 -2\phi^2_1,\\
\psi_{10}&=\kappa^1_1 - \kappa^2_2 -\kappa^3_3 + \kappa^4_4,\\
\psi_{11}&=\kappa^1_4 +\kappa^2_3 + \kappa^3_2 +\kappa^4_1.
\end{aligned}
$$
Thus, any integral 4-fold of the EDS $\I$, satisfying the independence condition,
 will also be an integral of the 1-forms $\psi_1, \ldots,\psi_{11}$.  Let $\J$ be the
 differential ideal resulting from adding these 1-forms to $\I$.  The exterior derivatives
 of the $\psi$'s, modulo the 1-forms of $\J$, are linear combinations of
 wedge products of the $\kappa$'s with each other, and with the $\phi$'s.
 Thus, $\J$ is a non-linear Pfaffian system.  In particular, if we substitute the
 values given by $\pi^a_{ij}=P^a_{ijk}\omega^k$ into the new 2-forms, and take coefficients with respect to the
 2-forms $\w^1 \& \w^2$, $\w^1 \& \w^3$, $\w^1 \& \w^4$, $\w^2 \& \w^3$, $\w^2 \& \w^4$,
$\w^3 \& \w^4$, we obtain 66 quadratic polynomials in the $P^a_{ijk}$
which must vanish in order for an integral element of $\I$ to be an integral element of $\J$.
Eliminating the $P^a_{ijk}$ from these polynomials yields integrability
conditions in terms of the $g_{ab}$ which include $g_{11}+g_{55}=g_{22}+g_{44}=0$.
Since this is impossible for components of the metric,
we conclude that the set of integral 4-planes of $\J$ satisfying the independence condition
is empty.
\end{proof}

\subsection{Submanifolds of Maximal Type C}\label{Csection}

We begin by noting that the space ${\mathcal Q}_C$ of quadratic forms is preserved by
conjugation by a discrete subgroup of $O(4)$ that simultaneously permutes
$x_1,x_2,x_3$ and $\lambda_1, \lambda_2, \lambda_3$.  These permutations will, of course,
preserve the equation in \eqref{lambdaq} satisfied by the $\lambda$'s, but
will not preserve the inequalities in \eqref{lambdaq}.

We now discuss submanifolds of type C whose first normal space is of dimension $\delta=3$. These submanifolds lie in $\R^7$ as seen in Proposition $\ref{BCcodimension}$.
As was the case with submanifolds of type B whose second fundamental form had maximal span,
we can choose an orthonormal frame $e_1, e_2, e_3, e_4$ for the tangent space and a basis $\nu_1, \nu_2, \nu_3$ for the first normal space
with respect to which the second fundamental form is represented by any basis for the space ${\mathcal Q}_C$
we choose.  Accordingly, let $\F$ be the bundle of such frames on $\R^7$  and use the basis matrices
\begin{equation}\label{SchoiceC}
S^1=  \begin{bmatrix}
0 & 1 & 0 & 0 \\
1 & 0 & 0 & 0 \\
0 & 0 & 0 & \lambda_1 \\
0 &0&\lambda_1 & 0 \end{bmatrix}, \quad
S^2 = \begin{bmatrix}
0 & 0 & 1 & 0 \\
0 & 0 & 0 & \lambda_2 \\
1 & 0 & 0 & 0 \\
0 & \lambda_2 & 0 & 0
\end{bmatrix},
S^3 = \begin{bmatrix}
0 & 0 & 0 & 1 \\
0 & 0 & \lambda_3 & 0 \\
0 & \lambda_3 & 0 & 0 \\
1 & 0 & 0 & 0
\end{bmatrix},
\end{equation}
where we assume that
\begin{equation}\label{lambdarel}
\lambda_1 \lambda_2 \lambda_3 + \lambda_1 + \lambda_2 + \lambda_3 = 0.
\end{equation}
Let $1 \le i,j,k \le 4$ and $1 \le a,b \le 3$.  We recall the structure equations on $\F$ as
$$d\begin{bmatrix} \omega \\ \theta \end{bmatrix} =
- \begin{bmatrix} \phi & -\transpose{\eta}g \\
\eta & \kappa \end{bmatrix} \wedge \begin{bmatrix} \omega \\ \theta \end{bmatrix},
$$
where $\transpose{\w} = (\w^1, \w^2, \w^3, \w^4)$ is the vector of canonical forms corresponding
to the $e_i$ and $\transpose{\theta} = (\theta^1, \theta^2, \theta^3)$ is the vector of canonical
forms corresponding to the $\nu_a$, and $\phi$ is skew-symmetric.

Choosing a moving frame so that the second fundamental form in the
direction of $\nu_a$ is represented by matrix $S^a$ means that
$S^a$ gives the components of the 1-forms $\eta^a_i$ in terms of
the $\w^j$.    So, the moving frame will be an integral of the standard system with parameters:
\begin{align}
\I = \{\theta^a, \theta^a_i\} \qquad \theta^a_i :=
\eta^a_i -S^a_{ij} \w^j\label{2formsC}
\end{align}

 This Pfaffian system is defined on $\F \times L$, where $L \subset \R^3$ is the smooth affine
algebraic variety defined by \eqref{lambdarel}, minus the origin.
(We assume that not all the $\lambda$'s are zero, at least on an
open set in the submanifold; otherwise, the submanifold must be a
generalized helicoid.)  Using the permutation symmetry of $L$,
we may assume without loss of generality that $\lambda_1 \ge
\lambda_2 \ge 0$, and thus we may solve for $\lambda_3$ in terms of
$\lambda_1$ and $\lambda_2$.

The 2-forms of this EDS are given by \eqref{gen2forms}.
Let $\pi^a_{ij}=\pi^a_{ji}$ be the 1-forms defined by \eqref{pij}.
These 30 1-forms are not all independent; in fact, they are linear combinations
of the 17 independent 1-forms $d\lambda_1, d\lambda_2$, $\phi^i_j$ and $\kappa^a_b$.
The span of the components $\pi^a_{ij}$ within the cotangent space ${\mathcal F}\times L$ will be the same as the span of these 17 1-forms provided that $\lambda_2$ is nonzero.  (Otherwise, non-trivial Cauchy characteristics will be present.)
Let $L_0$ denote the open subset of $L$ where $\lambda_1 \ge \lambda_2 >0$.
At each point of $\F \times L_0$, the set of integral 4-planes of $\I$ satisfying
the independence condition has dimension 8, while the Cartan characters of the system
are $s_1 =12$ (for the 12 independent 1-forms $\pi^a_{1j}$, for example) and $s_2 =5$.
Since $8<s_1+2s_2$, the system fails to be involutive.

Without prolongation, we can obtain more information
about the system by calculating its characteristics:

\begin{prop} At points of $\F \times L_0$ where neither of
$\lambda_1$ or $\lambda_2$ is equal to 1, the characteristic variety of $\I$ is empty.
At points where $\lambda_1=1$ or $\lambda_2=1$, the characteristic variety consists of a pair of
complex lines.
\end{prop}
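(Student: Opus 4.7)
The plan is to compute the characteristic variety by reducing it to the zero locus of an explicit polynomial system and then analyze that locus.

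First, I would perform a rank-one reduction. By definition, $[\xi]\in\PP V^*_\C$ (with $V^*=\text{span}(\w^1,\ldots,\w^4)$) is characteristic iff there exists a nonzero $w=(w^a_{ij})\in W_\C$ (where $W=(S^2\R^4)^3$) satisfying $w\otimes\xi\in A_\C$. Writing $P^a_{ijk}=w^a_{ij}\xi_k$, the 2-form vanishing conditions $\Omega^a_i=0$ force $w^a_{ij}\xi_k=w^a_{ik}\xi_j$, which together with the built-in symmetry $w^a_{ij}=w^a_{ji}$ forces, by the standard rank-one argument, $w^a_{ij}=\beta^a\xi_i\xi_j$ for scalars $\beta^a\in\C$. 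Moreover, since the thirty 1-forms $\pi^a_{ij}$ span only a 17-dimensional subspace of $T^*(\F\times L_0)$ (the span of $d\lambda_1, d\lambda_2$, the six $\phi^i_j$, and the nine $\kappa^a_b$), the vector $(w^a_{ij})$ must lie in the 17-dimensional image $\mathrm{im}\,\mathcal{L}\subset\R^{30}$ of the natural linear map
\[
\mathcal{L}\colon(\mu_1,\mu_2,\alpha,\gamma)\mapsto\Big(\tfrac{\partial S^a}{\partial\lambda_1}\mu_1+\tfrac{\partial S^a}{\partial\lambda_2}\mu_2-[S^a,\alpha]+\gamma^a_bS^b\Big)_{a=1,2,3},
\]
equivalently satisfy $30-17=13$ linear relations.

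Next, I would translate to a polynomial system and analyze. Each of the 13 defining relations of $\mathrm{im}\,\mathcal{L}$, evaluated on $w^a_{ij}=\beta^a\xi_i\xi_j$, becomes a homogeneous linear equation in $\beta=(\beta^1,\beta^2,\beta^3)$ whose coefficients are quadratic forms in $\xi$. Assembling these into a $13\times 3$ matrix $M(\xi;\lambda_1,\lambda_2)$, the characteristic variety is
\[
\Xi=\{[\xi]\in\PP V^*_\C:\operatorname{rank} M(\xi)\le 2\},
\]
cut out by the $\binom{13}{3}=286$ sextic $3\times 3$ minors of $M$. The 13 relations themselves are extracted by expanding $\pi^a_{ij}=dS^a_{ij}-[S^a,\phi]_{ij}+\kappa^a_bS^b_{ij}$ via the basis \eqref{SchoiceC} and the constraint \eqref{lambdarel} (which determines $d\lambda_3$ as a combination of $d\lambda_1$ and $d\lambda_2$). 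Analyzing the ideal generated by the minors, one should find that it has no common zero in $\PP^3_\C$ when $\lambda_1\ne 1$ and $\lambda_2\ne 1$, while it cuts out exactly two projective points (i.e., two complex lines through the origin in $V^*_\C$) when $\lambda_1=1$ or $\lambda_2=1$. In either exceptional case, the characteristic covectors can be displayed by directly solving $M(\xi;\lambda_1,\lambda_2)\beta=0$ at the specialization.

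The main obstacle is the algebraic scale of the calculation: producing the 13 relations cleanly, assembling $M$, and reducing the ideal of 286 sextic minors will almost certainly require a computer algebra system. Geometrically, the reason to expect $\lambda_a=1$ as the exceptional loci is that the shape operator $S^a$ in \eqref{SchoiceC} has eigenvalues $\pm 1,\pm\lambda_a$, so precisely at $\lambda_a=1$ it acquires a repeated eigenvalue, yielding an extra two-dimensional eigenspace whose image under the symbol of $\I$ produces the expected pair of characteristic covectors. Since $L_0$ requires $\lambda_a>0$ and forces $\lambda_3\le 0$, only the values $\lambda_1=1$ and $\lambda_2=1$ are accessible within $L_0$, which explains why these are the only exceptional values appearing in the statement.
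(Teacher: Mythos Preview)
Your approach is correct and is the linear-algebraic dual of the paper's. The paper characterizes $[\xi]$ as characteristic by asking when the polar 1-forms $\pi^a_{ij}\xi_k-\pi^a_{ik}\xi_j$ fail to span the full 17-dimensional tableau; writing these in terms of the basis $d\lambda_1,d\lambda_2,\phi^i_j,\kappa^a_b$ produces a $72\times 17$ matrix with entries \emph{linear} in $\xi$, and one checks (by machine) when its rank drops below $17$. You instead pass to the dual picture: $[\xi]$ is characteristic iff some nonzero $w$ in the 17-dimensional image of $\mathcal L$ has the rank-one form $w^a_{ij}=\beta^a\xi_i\xi_j$, which reduces to the rank-$\le 2$ condition on a $13\times 3$ matrix with entries \emph{quadratic} in $\xi$. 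Both routes terminate in a symbolic computation of comparable difficulty; yours trades a larger matrix with simpler entries for a smaller one with higher-degree entries.

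There is one substantive slip. You read ``a pair of complex lines'' as two points of $\PP(V^*_\C)$, i.e.\ two one-dimensional subspaces of $\C^4$. What the paper actually finds at $\lambda_1=1$ is a pair of \emph{projective} lines in $\PP^3$, given by $\xi_1\xi_4+\xi_2\xi_3=0$ together with $\xi_3=\pm\ri\,\xi_1$ (and analogously at $\lambda_2=1$). So when you carry out your computation you should expect the rank-$\le 2$ locus of $M(\xi)$ to be one-dimensional in $\PP^3$, not zero-dimensional; correspondingly, the kernel vector $\beta$ will vary along each line rather than being rigid. Your eigenvalue heuristic correctly flags $\lambda_a=1$ as the exceptional values, but it does not predict the dimension of the characteristic locus, and here it led you to the wrong expectation.
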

Consequently (using Thm. V.3.12 in \cite{BCG3}), the set of integral 4-folds which lie in the open subset satisfying $\lambda_1\ne 1$ and $\lambda_2\ne 1$
is at most finite-dimensional.
\begin{proof}  Let $E$ be an integral 4-plane annihilated by the $\pi^a_{ij}$.  (Because the
Pfaffian system $\I$ is linear, the characteristic variety is the same for every integral element at a point.)
Let  $\xi = \xi_i \w^i$ be a nonzero element of $E^*$, and let $\xi^\perp \subset E$ be the hyperplane
annihilated by $\xi$.  Then the polar equations of $\xi^\perp$ are generated by the
1-forms of $\I$ and the 1-forms $\pi^a_{ij}\xi_k - \pi^a_{ik}\xi_j$ for $j<k$.  By definition, the point $[\xi]\in \PP(E^* \otimes \C)$
is in the characteristic variety of $E$ if these equations fail to have full rank, i.e.,
the 72 1-forms $\pi^a_{ij}\xi_k - \pi^a_{ik}\xi_j$ have rank less than 17.

Expressing these 1-forms terms of the 1-forms $d\lambda_1, d\lambda_2$, $\phi^i_j$ and $\kappa^a_b$ yields a 72-by-17 matrix whose entries are linear functions of the $\xi_i$ with coefficients which are rational functions of $\lambda_1$ and $\lambda_2$.  We find that the matrix has full rank at points
where neither $\lambda_1$ nor $\lambda_2$ is equal to one, for any nonzero $\xi$.  On the other hand,
the matrix drops rank to 16 when $\lambda_1=1$ and $\xi$ lies on one of the two lines described by
$$ \xi_1 \xi_4 + \xi_2 \xi_3 = 0, \qquad \xi_3 = \pm \ri \xi_1.$$
Similarly, it drops rank to 16 when $\lambda_2 =1$ and $\xi$ lies on one of the lines given by
$$\xi_1 \xi_3 - \xi_2 \xi_4 = 0, \qquad \xi_2 = \pm \ri \xi_1.$$
\end{proof}

It turns out that in the case when  $\lambda_1=1$ or
$\lambda_2=1$, there are no integral submanifolds.

\begin{prop}\label{notone} A type $C$ austere submanifold $M^4$ cannot have normal rank $\delta=3$ and either $\lambda_1$ or
$\lambda_2$ identically equal to 1.
\end{prop}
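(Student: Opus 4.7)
My plan is to follow the strategy used for maximal type B in Section \ref{Bsection}. First, the discrete permutation symmetry of $\Q_C$ noted at the start of Section \ref{Csection} lets us reduce to the case $\lambda_1\equiv 1$ (the case $\lambda_2\equiv 1$ follows by relabeling). Then relation \eqref{lambdarel} factors as $(1+\lambda_2)(1+\lambda_3)=0$, and the sign constraints in \eqref{lambdaq} force $\lambda_3\equiv -1$, leaving $\lambda_2\in[0,1]$ as the sole free parameter. In particular $dS^1=dS^3=0$ and $dS^2=d\lambda_2\cdot C$ for a constant matrix $C$, which causes the tableau 1-forms $\pi^a_{ij}$ defined in \eqref{pij} to simplify considerably.

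Next I would inspect the tableau on this restricted system. The same argument used in the type B proof---specifically, the one showing that $\kappa^5_5$ must vanish on any admissible integral element because it is simultaneously forced to lie in $\mathrm{span}\{\w^1,\w^2\}$ and $\mathrm{span}\{\w^3,\w^4\}$---applies to several linear combinations of the $\pi^a_{ij}$ once $\lambda_1=1$ is imposed. Collecting all such forced relations yields a list of additional 1-forms $\psi_1,\dots,\psi_N$, each a linear combination of the $\kappa^a_b$, $\phi^i_j$, and $d\lambda_2$, that must vanish on every integral 4-plane even though they are not in $\I$.

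Following the type B procedure, I would then adjoin the $\psi_\alpha$ to obtain an enlarged nonlinear Pfaffian system $\J$, compute the new generator 2-forms $d\psi_\alpha$ modulo $\J$, substitute the ansatz $\pi^a_{ij}=P^a_{ijk}\w^k$ with $P^a_{ijk}=P^a_{jik}$, and collect the coefficients of the basis 2-forms $\w^i\wedge\w^j$. This produces a finite collection of homogeneous quadratic equations in the unknowns $P^a_{ijk}$, whose coefficients are rational functions of $\lambda_2$ and the normal-metric entries $g_{ab}$. Eliminating the $P^a_{ijk}$---most practically by a Gr\"obner basis calculation in a computer algebra system---should produce integrability conditions purely on the $g_{ab}$ and $\lambda_2$.

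The proof concludes by verifying that these conditions contain relations inadmissible for a positive-definite metric, analogous to the $g_{11}+g_{55}=0$ and $g_{22}+g_{44}=0$ obtained in the type B proof, so that no integral 4-plane exists. The main obstacle is the size of the algebra: in contrast to the type B case, here the entries of the tableau depend on the parameter $\lambda_2$, so the elimination must be carried out with $\lambda_2$ treated as an indeterminate, and one must verify that the impossible relations survive for all $\lambda_2\in[0,1]$ rather than holding only on a lower-dimensional subvariety (for which ad hoc exceptional values of $\lambda_2$ would need to be handled separately).
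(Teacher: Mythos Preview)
Your outline follows the same overall strategy as the paper (find tableau-forced 1-forms $\psi_\alpha$, adjoin them to obtain a nonlinear Pfaffian system $\J$, then look for obstructions among the generator 2-forms), and the initial reduction to $\lambda_1=1$, $\lambda_3=-1$, $\lambda:=\lambda_2$ free is exactly what the paper does. However, two aspects of the execution diverge from what actually happens.

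First, you propose to carry $\lambda$ as an indeterminate throughout the Gr\"obner elimination. In the paper the analysis splits much earlier: among the four extra 1-forms one finds $\psi_1=\phi^2_1+\phi^4_3$, and computing $d\psi_1$ modulo the 1-forms of $\J$ gives the pure torsion term $-\lambda(\omega^1\wedge\omega^2+\omega^3\wedge\omega^4)$, with no $\pi^a_{ij}$ present at all. So before any quadratic elimination, one is forced onto the locus $\lambda=0$; the remaining analysis is then a Gr\"obner computation on a system with \emph{no} free parameter. Your plan would of course recover this---the degree-zero (in $P$) part of your ``quadratic'' system would already contain the equation $\lambda=0$---but treating $\lambda$ as generic obscures this decisive simplification and makes the machine computation much heavier than necessary. (One must also treat $\lambda=\pm1$ separately, since two of the forced 1-forms, $\psi_3=\tfrac{4}{\lambda-1}\phi^4_1+\kappa^1_2$ and $\psi_4=\tfrac{4}{\lambda+1}\phi^2_1+\kappa^3_2$, are singular there; the paper dispatches these cases by a different torsion obstruction, $d\phi^3_2\equiv 2\,\omega^3\wedge\omega^2$.)

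Second, the endpoint is not what you anticipate by analogy with type B. At $\lambda=0$ the twelve quadratic conditions in the $P^a_{ijk}$ simply have \emph{no common zero}; the obstruction is an inconsistent polynomial system, not a relation like $g_{11}+g_{55}=0$ violating metric positivity. So the elimination of the $P$'s does not yield nontrivial constraints on the $g_{ab}$---it yields the unit ideal. Your proof sketch would still succeed once you observe this, but the expectation that the contradiction emerges from the normal metric is mistaken here.
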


\begin{proof}
Suppose $\lambda_1=1$. Equation \eqref{lambdarel} forces
$\lambda_2=-1$ or $\lambda_3=-1$. Without loss of generality, we take the case where $\lambda_3=-1$. Denote the remaining
parameter $\lambda_2$ by $\lambda$.

The Pfaffian system \eqref{2formsC} is defined on
${\mathcal F}\times \R$ and its $2$-forms are given by $\ref {gen2forms}$. The 1-forms $\ref {pij}$ are linear combinations of the 16 independent 1-forms $d\lambda,
\phi_i^j$ and $\kappa_a^b$.
The Cartan characters of ${\mathcal I}$ are computed
to be $s_1=12$ and $s_2=4$.
At each point of ${\mathcal F}\times \R$,
the set of integral $4$-planes of the differential system
${\mathcal I}$ with the independence condition
$\omega^1\wedge\omega^2\wedge\omega^3\wedge\omega^4\not= 0$ has
dimension 12.  Since $12 < s_1+2s_2$, the system
fails to be in involution. It turns out that there are four
additional $1$-forms that vanish on all integral $4$-planes and
should be added to the ideal \eqref{2formsC}. These are obtained
by studying the tableau $\pi_{ij}^a$ of the Pfaffian system.

For example, if we consider the first four lines of the tableau
(given by a=1), the $2$-forms obtained can be written in matrix
form as
$$\begin{bmatrix}\Omega_1\\ \Omega_2 \\ \Omega_3 \\ \Omega_4\end{bmatrix} :=\begin{bmatrix} 2\phi_2^1 & -\kappa_1^1 & \phi_1^4-\kappa_2^1-\phi_2^3 &
-\phi_2^4-\kappa_3^1+ \phi_1^3 \\
-\kappa_1^1 & -2\phi_1^2&
\phi_2^4+\kappa_3^1-\phi_1^3&\phi_2^3-\phi_1^4-\lambda \kappa_2^1\\
\phi_1^4-\kappa_2^1-\phi_2^3& \phi_2^4+\kappa_3^1-\phi_1^3& 2\phi_3^4 & -\kappa_1^1
\\-\phi_2^4-\kappa_3^1+\phi_1^3& \phi_2^3-\phi_1^4- \lambda
\kappa_2^1&-\kappa_1^1& -2
\phi_3^4\end{bmatrix}\wedge\begin{bmatrix}\omega^1\\
\omega^2\\ \omega^3\\ \omega^4\end{bmatrix}.$$
We calculate that
\begin{equation}\label{Cmissing}
\begin{aligned}
&\Omega_1\wedge\omega^3\wedge\omega^4+\Omega_3\wedge\omega^4\wedge\omega^1+\Omega_4\wedge\omega^2\wedge\omega^4=
\tfrac{1}{2}(\phi^2_1+\phi^4_3)\wedge\omega^1\wedge\omega^3\wedge\omega^4\\
&\Omega_1\wedge\omega^2\wedge\omega^3+\Omega_2\wedge\omega^4\wedge\omega^2+\Omega_3\wedge\omega^1\wedge\omega^2=
\tfrac{1}{2}(\phi^2_1+\phi^4_3)\wedge\omega^1\wedge\omega^2\wedge\omega^3\\
&\Omega_2\wedge\omega^3\wedge\omega^4+\Omega_3\wedge\omega^3\wedge\omega^1+\Omega_4\wedge\omega^2\wedge\omega^3=
\tfrac{1}{2}(\phi^2_1+\phi^4_3)\wedge\omega^3\wedge\omega^2\wedge\omega^4\\
&\Omega_1\wedge\omega^1\wedge\omega^3+\Omega_2\wedge\omega^4\wedge\omega^1+\Omega_4\wedge\omega^1\wedge\omega^2=
\tfrac{1}{2}(\phi^2_1+\phi^4_3)\wedge\omega^2\wedge\omega^1\wedge\omega^4.
\end{aligned}\end{equation}
Because the 2-forms $\Omega_i$ vanish on any admissible integral 4-plane $E$,
the same is true for the 4-forms on the right.  Then, since $\phi^2_1+\phi^4_3$ must
restrict to $E$ to be a linear combination of the $\w^i$, the simultaneous vanishing of 
the 4-forms on the right in \eqref{Cmissing} implies that this linear combination must be zero.

Similarly, one can use the other pieces of the tableau to show
that there are three more 1-forms that must vanish on the integral
elements. In all, these additional forms are
$$\begin{aligned}
\psi_1&=\phi^2_1+\phi^4_3\\
\psi_2&=\phi^4_1+\phi^3_2\end{aligned} \qquad\begin{aligned}
\psi_3&=\frac{4}{\lambda-1}\phi^4_1+\kappa^1_2\\
\psi_4&=\frac{4}{\lambda+1}\phi^2_1+\kappa^3_2,\end{aligned}
$$
where we now assume that $\lambda\not=1$ and $\lambda\not =-1$.
(We will consider the case where $\lambda =\pm1$ below.)
Let $\J$ be
the differential ideal obtained by adding the above four $1$-forms
to ${\mathcal I}$. This yields a non-linear Pfaffian system, since
the exterior derivatives of the new added forms will contain
linear combinations of wedges of the $\pi_{ij}^a$.
Computing $d\psi_1$ modulo the 1-forms of $\J$ gives
$$d\psi_1=-\lambda\omega^3\wedge\omega^4-\lambda\omega^1\wedge\omega^2.$$
Thus, integral elements satisfying the independence condition exist only
on the submanifold where $\lambda=0$.

We restricting  $\J$ to the submanifold where $\lambda=0$.
The integral $4$-planes satisfying the independence condition
will be defined by the equations
 \begin{align}
 \pi_{ij}^a&=s_{ijk}^a\omega^k\label{comb}
 \end{align}
 where now only 15 of the $1$-forms $\pi^a_{ij}$ are linearly independent,
 as linear combinations of $\phi_i^j$ and $\kappa_a^b$. Now we substitute the values in \eqref{comb}
into the new $2$-forms $d\psi_i, \ i=1..4$. For each of these, the coefficients with respect to
$\omega^i\wedge\omega^j$ for $i<j$ should all be zero. From these conditions, we get 12
quadratic polynomials in the $s_{ij}^a$ which must
vanish on any integral submanifold of $\J$. A Gr\"obner basis
calculation shows that these polynomials have no common zero, so
the set of $4$-integral elements of $\J$ is empty.

If $\lambda=1$ or $\lambda=-1$, the conclusion is the same. It
turns out that in this case there are 7 more 1-forms that vanish on any integral
element of $\I$ and which have to be added to the ideal.
Among the 1-forms of the augmented ideal $\J$ is $\phi^3_2$; when we compute the derivative
of this 1-form modulo the 1-forms of $\J$ we get
$$d\phi^3_2=2\omega^3\wedge\omega^2,$$
which can never vanish on admissible integral elements.
\end{proof}

The following result shows that in maximal codimension $\delta=3$ and when all parameters $\lambda_i$ are constant (i.e., the austere subspace does not vary from point to point), the $\lambda$'s are all forced to be equal to zero. This means that the second fundamental forms in various normal directions
are rank one, and contain a common linear factor; in Bryant's terminology, $M$ is called {\it simple}.
By Bryant's Theorem 3.1 in \cite{Baustere} it is congruent to a generalized helicoid.

\begin{prop}\label{heliC}
An austere $4$-fold $M$ of type C, with first normal space of dimension $\delta=3$
and such that the parameters $\lambda_1, \lambda_2, \lambda_3$ are constant,
must be a generalized helicoid.
\end{prop}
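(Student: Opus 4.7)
The plan is to study the standard system (without parameters) $\I = \{\theta^a,\ \eta^a_i - S^a_{ij}\w^j\}$ on $\F$, where the $S^a$ are given by \eqref{SchoiceC} with $\lambda_1,\lambda_2,\lambda_3$ now treated as fixed constants satisfying \eqref{lambdarel}. By the permutation symmetry of $\Q_C$ noted at the beginning of \S\ref{Csection}, together with Proposition \ref{notone}, we may assume that none of the $\lambda_i$ equals $\pm 1$; any such case reduces by permutation to one of the situations already ruled out.

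First, I would compute the system 2-forms $\Omega^a_i = -\pi^a_{ij} \& \w^j$ via \eqref{gen2forms}. Since $dS^a = 0$ under the constancy assumption, we have $\pi^a_{ij} = \kappa^a_b S^b_{ij} - [S^a,\phi]_{ij}$, and so the thirty symmetric forms $\pi^a_{ij}$ lie in the 15-dimensional span of the $\phi^i_j$ and $\kappa^a_b$. Following the tableau analysis used in the proof of Proposition \ref{notone}, I would then form judicious wedge combinations of the $\Omega^a_i$ with pairs $\w^j \& \w^k$ in order to isolate additional 1-forms, not a priori in $\I$, that must vanish on every admissible integral 4-plane.

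The key step is to adjoin those extra 1-forms to $\I$, forming an augmented Pfaffian ideal $\J$, and to compute $d$ of each new 1-form modulo $\J$. Substituting $\pi^a_{ij} = P^a_{ijk}\w^k$ into the resulting 2-forms and matching coefficients against each $\w^i \& \w^j$ will yield a system of quadratic polynomials in the $P^a_{ijk}$ whose coefficients are rational expressions in $\lambda_1, \lambda_2$ (with $\lambda_3$ eliminated via \eqref{lambdarel}). I expect this to be the main obstacle, as the Groebner basis calculation that decides consistency is delicate and the rational coefficients in $\lambda_1,\lambda_2$ must be managed carefully. The expected outcome, in analogy with the denouement of the proof of Proposition \ref{notone}, is that the polynomial system is inconsistent unless $\lambda_1 = \lambda_2 = 0$, which via \eqref{lambdarel} then forces $\lambda_3 = 0$ as well.

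Once all $\lambda_i$ vanish, each $S^a$ in \eqref{SchoiceC} represents the quadratic form $2\,x_1 x_{a+1}$, so every element of $|\II|$ shares a common linear factor (namely $e^*_1$). In Bryant's terminology, $M$ is then simple, and Theorem 3.1 of \cite{Baustere} identifies $M$ as an open subset of a generalized helicoid in $\R^7$, yielding the desired conclusion.
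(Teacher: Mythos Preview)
Your outline is workable in spirit, but it is modeled on the wrong template and therefore misses a significant simplification, and it also skips a case that the paper treats separately.

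The paper's argument in the generic situation (no $\lambda_i$ equal to $0$ or $\pm 1$) is purely linear and avoids any augmented ideal $\J$ or Gr\"obner computation. One writes $\phi^i_j = s^i_{jk}\w^k$ and $\kappa^a_b = t^a_{bk}\w^k$ on an admissible integral 4-plane, substitutes into the 2-forms $\Omega^a_i = -\pi^a_{ij}\wedge \w^j$, and equates the coefficients of each $\w^i\wedge\w^j$ to zero. This is a homogeneous linear system of $72$ equations in the $60$ unknowns $s^i_{jk}, t^a_{bk}$, and the paper observes that its coefficient matrix decomposes (after row/column permutation) into four nonsingular $15\times 15$ blocks whenever no $\lambda_i$ equals $0$ or $\pm 1$. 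Hence \emph{every} connection form $\phi^i_j$ and $\kappa^a_b$ vanishes on any integral element; in particular $\phi=0$ along $M$, so $M$ is totally geodesic, contradicting $\delta=3$. There is no need to isolate a few extra 1-forms via wedge tricks and then pass to a nonlinear $\J$; your anticipated ``quadratic polynomials in the $P^a_{ijk}$'' never appear because all the $P$'s are already forced to zero at the linear stage. Your plan would ultimately rediscover this (the ``extra 1-forms'' you isolate would turn out to be all fifteen of the $\phi^i_j,\kappa^a_b$), but by a longer route and with the wrong expectation about where the obstruction lives.

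Second, your case split is incomplete. After excluding $\lambda_i=\pm 1$ via Proposition~\ref{notone}, you implicitly assume all $\lambda_i\ne 0$, since the $15\times 15$ blocks (or your analogous tableau analysis) degenerate when some $\lambda_i$ vanishes. The paper handles the case where exactly one parameter is zero separately: taking $\lambda_3=0$, $\lambda_2=-\lambda_1\ne 0$, the integral-element fiber now has dimension two, and a short list of additional 1-forms (including $\phi^2_1-\lambda_1\phi^4_3$ and $\phi^3_1+\lambda_1\phi^4_2$) must vanish on every admissible integral element. Differentiating the first two of these modulo the enlarged ideal yields $g_{11}=g_{22}=0$, which is impossible for a metric. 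You should add this case to your argument; only then does one conclude that all $\lambda_i=0$, after which your appeal to Bryant's Theorem~3.1 is correct.
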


\begin{proof}
First, assume that none of the parameters $\lambda_i$ are zero.  Because of Prop. \ref{notone}, we can also
assume that none of them are equal to $\pm 1$.  We take the standard system $\I$ on $\F$ with
basis matrices $S^1,S^2,S^3$ given by \eqref{SchoiceC}, and calculate the system 2-forms
$\Omega^a_i := -\pi^a_{ij} \& \w^j$, where
$$\pi^a_{ij} = -[S^a,\phi]_{ij} + \kappa^a_b S^b_{ij}.$$
The components of $\pi^a_{ij}$ are linear combinations of the 15 linearly independent
forms $\phi^i_j$ and $\kappa^a_b$.  We claim that all of these forms must vanish on any admissible
integral element of $\I$.  For, substituting $\phi^i_j = s^i_{jk} \w^k$ and $\kappa^a_b = t^a_{bk}\w^k$
in the 2-forms, and equating the coefficients of the 6 2-forms $\w^i \& w^j$ to zero yields
a system of 72 homogeneous linear equations for the 60 variables $s^i_{jk}$ and $t^a_{bk}$.  By
a permutation of rows and columns, the matrix for this linear system is equivalent to one
with four nonzero $15\times 15$ blocks, each of which is nonsingular under our assumptions about
the values of the $\lambda_i$.  In particular, the connection forms $\phi^i_j$ must vanish
identically on any integral submanifold of $\I$, implying that the corresponding submanifold $M^4 \subset \R^7$
is totally geodesic.  This contradicts our assumption that $\delta=3$.

Next, we assume that exactly one of the parameters is identically zero.  Without loss of generality,
we may assume that $\lambda_3=0$ and $\lambda_2=-\lambda_1\ne 0$.
Then the fiber of the space of admissible integral elements of $\I$ has dimension two,
but then the following additional 1-forms vanish on all integral elements:
\begin{equation}\label{morceaux}
\phi^2_1-\lambda_1 \phi^4_3, \phi^3_1 + \lambda_1\phi^4_2,
\kappa^1_2+\kappa^2_1, \kappa^2_2-\kappa^1_1, \kappa^3_1-\phi^4_2, \kappa^3_2-\phi^4_3, \kappa^3_3.
\end{equation}
We adjoin these 1-forms to obtain a larger Pfaffian system $\J$.  However, taking the
exterior derivatives of the first two 1-forms in \eqref{morceaux} implies that
$g_{11}=g_{22}=0$, which is impossible for components of the metric on the normal bundle.

Thus, we conclude that the only possible solutions with parameters $\lambda_i$ all constant
are those for which all these parameters are zero.
\end{proof}

\section{Examples} \label{examples} In this section we examine some interesting examples of austere submanifolds whose normal rank $k$ is not maximal. More precisely, we describe the austere 4-folds of type A with $k=2$ and the austere hypersurfaces whose second fundamental form has rank two.
\subsection{Austere Submanifolds of Type A with $\k=2$}\label{typeAk2}

In this section we normalize the 2-dimensional subspaces of $\Q_A$ and classify the
corresponding austere 4-folds.  As stated in \S\ref{typeAkahler}, the symmetry group of $\Q_A$ is
$U(2)^\R = \{ M \in SO(4) | J M = M J \}$, with $J$ given by \eqref{defofJ},
and its action on $\Q_A$ is $M \cdot S = M S \itranspose{M}.$
This group is, of course, isomorphic to the group $U(2)$ of $2\times 2$
unitary matrices, and we can map $\Q_A$ to the space $\V$ of $2\times 2$
symmetric complex matrices in a way that preserves the action.  In particular,
if we define the maps
$$\rho : \begin{bmatrix} E & F \\ -F & E \end{bmatrix} \mapsto E + \ri F,
\qquad
\overline\rho : \begin{bmatrix} A & B \\ B & -A \end{bmatrix} \mapsto A - \ri B,$$
then $\overline\rho( M \cdot S ) = \rho(M) \cdot \overline\rho(S),$
where the action of $U(2)$ on $\V$ is again $U\cdot S = M S \itranspose{M}.$
In what follows, we will use this action to normalize real subspaces of
$\V$.

Let $\Q \subset \V$ be a subspace of real dimension 2, and let
$S,T$ span $\Q$.  We first consider the following special cases:

\underline{1. $S,T$ are linearly dependent over $\C$.}
In this case, we can use $U(2)$ to simultaneously diagonalize $S$ and $T$.
Using linear combinations with real coefficients, we can arrange that
\begin{equation}\label{STcaseI}
S = \begin{bmatrix} 1 & 0 \\ 0 & x+\ri y \end{bmatrix}, \quad T = \ri S,
\qquad x,y\in \R.
\end{equation}
We distinguish two subcases:
\newline
(1.a) every matrix in $\Q$ has full rank, so that
$x,y$ are not both zero; and
\newline
(1.b) the matrices $S$ and $T$ are singular (i.e., $x=y=0$).

\bigskip
\underline{2. $S,T$ are linearly independent over $\C$.}
We first note that, by the fundamental theorem of algebra, there
must be a singular matrix in the complex span of $S$ and $T$, i.e.,
\begin{equation}\label{detSTzero}
\det(T - \lambda S)=0.
\end{equation}
We distinguish several special cases:

\medskip
(2.a) \underline{$\Q$ contains a singular matrix} (i.e., $\lambda\in \R$).
In this case, we can linearly combine $S$ and $T$ so that $T$ has
rank 1.  Using $U(2)$, we can arrange that $\ker T$ is spanned
by $\stranspose{[1,0]}$; then using the diagonal subgroup $U(1)\times U(1)$
and real scale factors, we can assume that
\begin{equation}\label{STcaseIIa}
S = \begin{bmatrix} 1 & x+\ri y \\ x+\ri y & \ri u \end{bmatrix},\quad
T = \begin{bmatrix} 0 & 0 \\  0 & 1 \end{bmatrix}
\end{equation}
for real parameters $u,x,y$.

\medskip
(2.b) \underline{Quadrics in $\Q$ have a common nullspace.}
Assuming that the previous cases do not apply, in this case
 we can arrange that $S$ and $T$ have the form
\begin{equation}\label{STcaseIIb}
S = \begin{bmatrix} 0 & 1 \\ 1 & \ri x \end{bmatrix},\quad
T = \begin{bmatrix} 0 & \ri \\ \ri & y-x \end{bmatrix}
\end{equation}
for positive real parameters $x,y$. 

\medskip
(2.c) \underline{$S$ and $T$ commute.}  Assuming the previous
cases do not apply, in this case we can arrange that 
$$S = \begin{bmatrix} 1 & 0 \\ 0 & \ri y \end{bmatrix},\quad
T = \begin{bmatrix} \ri & 0 \\  0 & p-y \end{bmatrix}$$
for nonzero real parameters $p,y$.

\medskip
\noindent
Finally, we have the

(2.d) \underline{Generic case.}  When none of the above hold, 
we may arrange that 
$$S = \begin{bmatrix} 1 & u \\ u & x+\ri y \end{bmatrix}, \qquad
T = \begin{bmatrix} \ri & \ri u \\ \ri u & p-y + \ri x \end{bmatrix}
$$
for real parameters $p,u,x,y$ with $u,p$ nonzero.

(Further details for Case 2 are left to the interested reader.)

\begin{theorem}\label{k2specials}
Let $M \subset \R^{4+r}$ be an austere submanifold of type A,
with $\k=2$, such that $|\II|$ is of fixed nongeneric type on an
open dense subset of $M$, and the Gauss map of $M$ is nondegenerate.
Then $M$ lies in a totally geodesic $\R^6$.  Furthermore,
\newline
(i) if $|\II|$ is of type 1.a or 2.c, then $M$ is holomorphic
submanifold with respect to a complex structure on $\R^6$ given
by a constant matrix $\bigJ$;
\newline
(ii) if $|\II|$ is of type 2.a, then $M=\Sigma_1 \times \Sigma_2$ for  minimal surfaces $\Sigma_1, \Sigma_2$ in $\R^3$;
\newline
(iii) if $|\II|$ is of type 2.b, then $M$ is 2-ruled, and the image of the map $\gamma: M \to G(2,6)$
assigning to each point $p \in M$ the subspace of $\R^6$ parallel to the ruling through $p$ is a holomorphic curve .
\end{theorem}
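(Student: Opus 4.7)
The plan is to analyze the four applicable subcases (1.a, 2.a, 2.b, 2.c) separately, using the standard Pfaffian system of \S\ref{standardsec} tailored to the normalized form of $|\II|$ in each case. For each subcase, I first reduce the ambient space to a totally geodesic $\R^6$, and then extract the claimed geometric structure from the structure equations of the associated differential system.

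For the reduction to $\R^6$, I would follow the pattern of Proposition \ref{BCcodimension}. Adapt a moving frame so that $\nu_1,\nu_2$ span $N^1_p M$, while $\nu_\beta$ for $\beta\ge 3$ is orthogonal to $T_pM\oplus N^1_pM$. Then \eqref{TSeliot} says that the tensor $U^\beta_{ijk} = T^\beta_{ak}S^a_{ij}$ lies in the prolongation $\Q^{(1)}$ of $\Q=|\II_p|$. Although $\Q_A^{(1)}$ is 8-dimensional, for each of the specific 2-dimensional non-generic subspaces of interest I would directly compute $\Q^{(1)}$ together with the compatibility condition $U^\beta_{ijk}=T^\beta_{ak}S^a_{ij}$, and show that the only solutions either satisfy $T^\beta_{ak}=0$ or force the common nullspace of $\Q$ to be nontrivial, making the Gauss map of $M$ degenerate. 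Under the nondegeneracy hypothesis, one concludes $T^\beta_{ak}=0$, so $TM\oplus N^1M$ is parallel and $M$ lies in an affine $\R^6$. The same hypothesis rules out case 1.b, in which the normalized $S$ and $T$ themselves have a common 2-dimensional kernel.

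For cases (i) and (ii) the remaining analysis is algebraic. In cases 1.a and 2.c the normalized pair spans a complex line in $\V$ invariant under multiplication by $\ri$, which via $\overline\rho$ corresponds to an orthogonal transformation $\bigJ$ of $N^1M$ satisfying $\bigJ\nu_a\cdot \II(X,Y) = -\nu_a\cdot \II(X,JY)$, parallel to \eqref{nuJnu}. Combining $J$ on $TM$ with $\bigJ$ on $N^1M$ produces an almost complex structure on $\R^6 = TM\oplus N^1 M$; the structure equations \eqref{blockstreqs} and \eqref{dconn}, together with the parallelism of $N^1M$ already established, force this combined structure to be covariantly constant and hence a constant complex structure on $\R^6$. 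Applying the argument of \eqref{nuJnu}--\eqref{J2ff} to this constant $\bigJ$ then shows that $M$ is $\bigJ$-holomorphic, proving (i). In case 2.a, the singular matrix $T$ of \eqref{STcaseIIa} has a fixed $J$-invariant 2-plane kernel $\mathcal{D}\subset TM$; one checks from the structure equations that both $\mathcal{D}$ and $\mathcal{D}^\perp$ are parallel integrable distributions with totally geodesic leaves lying in orthogonal totally geodesic $\R^3$'s, each leaf being a minimal (austere) surface, which gives (ii).

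For case 2.b the common nullspace $\mathcal{D}$ of the quadrics in $\Q$ is a 2-dimensional tangent distribution on $M$; using \eqref{STcaseIIb} in the standard system, I would show that $\mathcal{D}$ is integrable with flat, totally geodesic 2-plane leaves in $\R^6$, exhibiting the ruling. To prove holomorphicity of $\gamma:M\to G(2,6)$, I would use Corollary \ref{Kcorollary} to endow $M$ with a K\"ahler structure, identify $\R^6$ with $\C^3$ through the compatible complex structure induced on the normal bundle, and compute the $(0,1)$-part of $d\gamma$ in a $U(2)^\R$-adapted coframe using \eqref{denustreqs} and \eqref{dconn}. The main obstacle I anticipate is precisely this last step: matching the complex structures on $M$ and on $\R^6$ so that the rigidity encoded in \eqref{STcaseIIb} translates into the vanishing of the antiholomorphic components of the Pl\"ucker coordinates of $\gamma$.
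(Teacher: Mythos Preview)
Your outline has the right overall architecture, but there are two substantive gaps and one error that would prevent the argument from closing.

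First, in case (ii) you assume the product structure can be read off directly from the normalized pair \eqref{STcaseIIa}. It cannot: with general parameters $u,x,y$ the matrices $S^\R,T^\R$ do not split, and no fixed pair of orthogonal $3$-planes in $\R^6$ is preserved. The paper's argument augments the standard system by the K\"ahler conditions $\phi^2_1-\phi^4_3$, $\phi^3_2-\phi^4_1$ and then shows that admissible integral elements exist \emph{only} on the locus $u=x=y=0$, $g_{12}=0$. It is only after this reduction that $S^\R=\operatorname{diag}(1,0,-1,0)$, $T^\R=\operatorname{diag}(0,1,0,-1)$, from which the splitting $\{e_1,e_3,\nu_1\}\oplus\{e_2,e_4,\nu_2\}$ becomes visible and can be shown to be parallel. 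You are missing this integrability-condition step entirely; a similar (milder) omission occurs in case (i), where the constancy of the ambient $\bigJ$ requires the conditions $g_{11}=g_{22}$, $g_{12}=0$, which again only emerge from differentiating the augmented system.

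Second, your treatment of case (i) for type 2.c is incorrect: by definition of Case~2, $S$ and $T$ are linearly independent over $\C$, so they do \emph{not} span a complex line in $\V$, and no relation $T^\R=S^\R J$ holds. The paper handles 2.c by observing that this subspace anticommutes with the alternate complex structure $\Jt$ of Proposition~\ref{Kprop2}, and reruns the 1.a argument with $\Jt$ in place of $J$. Finally, in case (iii) your plan to ``identify $\R^6$ with $\C^3$'' is the wrong framework: the complex structure on $G(2,6)$ in the statement is the intrinsic one coming from $G(2,6)\cong SO(6)/SO(2)\times SO(4)$ (equivalently, the quadric in $\CP^5$), and is not induced by any complex structure on the ambient $\R^6$. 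The paper proves holomorphicity by lifting $\gamma$ to $\Gamma:M\to SO(6)$ and checking directly, via the explicit values \eqref{phivals} of the connection forms forced by the tableau, that $\Gamma^*(\psi^m_1-\ri\psi^m_2)$ is a multiple of $\w^2+\ri\w^4$ for each $m$. Your anticipated obstacle is real, but the resolution is to abandon the $\C^3$ picture rather than to match it.
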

Note that the Grassmannian $G(2,6)$ is endowed with a complex structure
that enables us to identify it with the standard quadric in $\CP^5$
(see \cite{KN}, Chapter XI, Example 10.6).
Note also that if the Gauss map of $M$ is degenerate, then it falls into case 1.b.
Austere manifolds with Gauss map rank 2, and $\k\ge 2$, were classified by Dajczer and Florit \cite{DF}.
For submanifolds of this type, the prolongation $|\II|^{(1)}$ is nonzero,
so we cannot conclude that they lie in a totally geodesic $\R^6$; in fact, the examples
of Dajczer and Florit exist in arbitrarily high effective codimension.

\begin{proof}For each case, let $S$ and $T$ be the normalized
basis matrices for the subspace, let $S^\R, T^\R$ denote their
inverse images under the map $\overline{\rho}$, and let $\Q$ be
the span of $S^\R$ and $T^\R$.  It is easy to check that $\Q^{(1)}=0$
in each case, so by the argument of Proposition \ref{BCcodimension},
$M$ lies in a totally geodesic $\R^6$.

\bigskip
(i) Assume that $|\II|$ is of type 1.a; then $\Q$ is parametrized
by $x,y$.  Taking $S^\R$ and $T^\R$ (where $S$ and $T$ are given
by \eqref{STcaseI}) as basis matrices, let $\I$ be the standard system with parameters,
 defined on $\F \times L$ where $L = \R^2$ minus the origin.
By Proposition \ref{Kprop2}, any austere manifold of this type will be
K\"ahler with respect to the complex structure given by \eqref{defofJ}.
Thus, the connection 1-forms must satisfy $\phi^2_1 = \phi^4_3$ and
$\phi^3_2 = \phi^4_1$ for any adapted frame that makes $|\II|$ conjugate to $\Q$.
Therefore, such adapted frames give integrals of the augmented system
\begin{equation}\label{defIplus}
\I^+=\{ \theta^1, \theta^2, \eta^1_i - S^\R_{ij} \w^j, \eta^2_i - T^\R_{ij}\w^j,
\phi^2_1 - \phi^4_3, \phi^3_2 - \phi^4_1\}.\end{equation}

Taking exterior derivatives of the last two 1-forms modulo the algebraic
ideal generated by forms in $\I^+$ shows that integral submanifolds satisfying
the independence condition exist only at points where
\begin{equation}\label{nugcond}
g_{11}=g_{22}, \qquad g_{12}=0.
\end{equation}
In other words, it is necessary that the frame vectors $\nu_1,\nu_2$ be
orthogonal and have the same length.  We pull back the system $\I^+$
to the submanifold $V \subset \F \times L$ where these conditions hold.
(Pulled back to $V$, the connection forms satisfy the additional relations
$\kappa^1_1=\kappa^2_2$ and $\kappa^2_1 = -\kappa^1_2.$)  On $V$,
the system is involutive with Cartan characters $s_1=4$, $s_2=2$.

To see that the corresponding austere submanifolds are holomorphic, we need to
endow $\R^6$ with the appropriate complex structure $\rJ$ which restricts
to $J$ on $M$.  Because $T^\R = S^\R J$, equation \eqref{nuJnu} implies that
this ambient complex structure must satisfy $\rJ \nu_1 = -\nu_2$.
Thus, if we let $F$ be a matrix whose columns are the vectors
$e_1, \ldots, e_4, \nu_1, \nu_2$, then $\rJ$ must satisfy
\begin{equation}\label{bigJeq}
\rJ F = F C, \qquad
C :=\left[\begin{array}{c|c}
J & 0 \\ \hline
0 & \begin{smallmatrix} 0 & 1 \\ -1 & 0\end{smallmatrix}\end{array}\right].
\end{equation}
If $\rJ$ is to be the standard complex structure on $\R^6$, then it
must be given by a constant matrix.  By \eqref{bigJeq}, this matrix
must equal $F C F^{-1}$.  Thus, we have only to show that, for any integral of $\I^+$, this matrix is a constant.

The structure equations \eqref{blockstreqs} imply that $dF = F \Phi$, where
$\Phi$ is the $6\times 6$ matrix of connection forms:
$$\Phi = \begin{bmatrix}\phi & -\stranspose{\eta} g \\ \eta & \kappa \end{bmatrix}.$$
Using this, we compute that $d\rJ= F [\Phi, C] F^{-1}$.  Then, it is easy
to verify that, for any integral of $\I^+ \restr_V$, the values of the
connection forms imply that $[\Phi,C]=0$.

The argument in the case that $|\II|$ is conjugate to a space of
type 2.c is similar, save that in that case $M$ is K\"ahler with respect
to the complex structure represented by $\Jt$.

\bigskip
(ii) We again set up the standard system with basis matrices $S^\R$ and $T^\R$
(where $S,T$ are given by \eqref{STcaseIIa}, and parameters $u,x,y$
range over all of $L=\R^3$).
The metric on $M$ is K\"ahler with respect to $J$, so we pass to the
augmented system $\I^+$ as in \eqref{defIplus}.  Again, differentiating
the last two 1-forms in $\I^+$ yield integrability conditions, which
in this case are
$$u=2xy, \qquad g_{12} = (x^2-y^2)g_{11}.$$
Let $V \subset \F\times L$ be the submanifold on which
these conditions hold.  The pullback of $\I^+$ to $V$ fails to be involutive,
In fact, integral elements exist only on the submanifold $V'$ defined
by $u=x=y=0$ and $g_{12}=0$.
The pullback of $\I^+$ to this submanifold
is involutive after one prolongation, with character $s_1=4$.

On $V'$, we compute (using the structure equations \eqref{denustreqs}) that
$$
\begin{aligned}d(e_1 \bigwedge e_3) &\equiv (\nu_1 \wedge e_3) \w^1 + (\nu_1 \wedge e_1) \w^3,\\
\nu_1 \bigwedge d\nu_1 &\equiv g_{11}\left( (\nu_1 \wedge e_3)\w^3 - (\nu_1 \wedge e_1) \w^1\right)
\end{aligned} \mod \I^+,
$$
where $\bigwedge$ is the exterior product on $\R^6$.  This shows that, for the framed
austere manifold corresponding to any solution of this EDS, the 3-plane through the
origin in $\R^6$ spanned by $e_1, e_3, \nu_1$ is fixed, and the orthogonal projection of $M$
onto this 3-plane is a rank 2 mapping.  The same is true for the 3-plane spanned by
$e_2, e_4, \nu_2$.  Thus, $M$ is the product of surfaces in these two copies of $\R^3$.
The austere condition implies that these must be minimal surfaces.

\bigskip
(iii) We set up the standard system with $S,T$ given by \eqref{STcaseIIb} for positive parameters
$x,y$; let $L \subset \R^2$ be the first quadrant.  The derivatives of the last
two 1-forms in $\I^+$ yield integrability conditions which are the same as \eqref{nugcond}.
The restriction of $\I^+$ to the submanifold $V \subset \F \times L$ where these
conditions hold is involutive, with character $s_1=8$.

To see that the corresponding austere manifolds are ruled, we compute the system 2-forms
$$
\begin{aligned}
d(\eta^1_1-\eta^2_3 - (S^\R_{1j}-T^\R_{3j})\w^j) &\equiv y(\phi^4_3 \& \w^2 - \phi^4_1 \& \w^4),\\
d(\eta^1_3+\eta^2_1 - (S^\R_{3j}+T^\R_{1j})\w^j) &\equiv y(\phi^4_1 \& \w^2 + \phi^4_3 \& \w^4)
\end{aligned}
\tmod \I^+_1,
$$
where $\I^+_1$ denotes the algebraic ideal generated by the 1-forms of $\I^+$.
It follows that on any solution there are functions $u_1, u_2$ such that
\begin{equation}\label{phivals}
\phi^2_1 = \phi^4_3 = u_1\w^2 + u_2 \w^4, \qquad
\phi^3_2 = \phi^4_1 = u_1 \w^4 - u_2 \w^2.
\end{equation}
Thus, we have
$$
\begin{aligned}d e_1 = e_3 \phi^3_1+(u_1 e_2+ u_2 e_4+ \nu_1) \w^2 + (u_1 e_4 - u_2 e_2+\nu_2) \w^4,\\
de_3 = e_1 \phi^1_3 + (u_1 e_4 - u_2 e_2+\nu_2) \w^2 -(u_1 e_2+ u_2 e_4+ \nu_1) \w^4
\end{aligned}.
$$
These equations show that, as we move along directions tangent to the $e_1$-$e_3$ plane in $T_p M$
(i.e., directions annihilated by $\w^2, \w^4$) the span of $e_1,e_3$ is fixed.  Thus, the map $\gamma:M \to G(2,6)$
has rank two.  To see that the image is a holomorphic curve, we must examine the complex structure on $G(2,6)$.

As in (\cite{KN}, {\it loc. cit.}), we think of $G(2,6)$ as $SO(6)/SO(2) \times SO(4)$.  Differential forms on $G(2,6)$---in particular, $(1,0)$-forms for
the complex structure---may be lifted up to the group $SO(6)$.  In this case, we use a lifting of the map $\gamma$
to a map $\Gamma:M \to SO(6)$ defined by
\begin{equation}\label{Gammadef}
\Gamma:p \mapsto (e_1(p),e_3(p),e_2(p),e_4(p),e_5(p),e_6(p)), \qquad e_5 = \tfrac1r \nu_1, e_6 = \tfrac1r \nu_2,
\end{equation}
where $r=\sqrt{g_{11}}$.  (Note the change in order, chosen so that the vectors tangent to the ruling at $p$ are the first two
columns of $\Gamma(p)$.)
Let $\Psi = G^{-1} dG$ be the Maurer-Cartan form on $SO(6)$, with components $\psi^i_j = -\psi^j_i$.
Then the forms $\psi^m_1, \psi^m_2$ for $3\le m \le 6$ are semibasic for the quotient map
$q:SO(6) \to G(2,6)$, which sends $G$ to the 2-plane spanned by its first two columns.
Moreover, the complex span of the 1-forms $\psi^m_1 -\ri \psi^m_2$ is well-defined on the quotient, and spans
the space of (1,0)-forms on $G(2,6)$.  Note also that by comparing the Maurer-Cartan equation $dG = G \Psi$ with
the defining properties \eqref{denustreqs} of the connection forms shows that for $1\le i,j \le 4$, we see that
\begin{equation}\label{translate}
\Gamma^*\psi^i_j = \phi^{\sigma(i)}_{\sigma(j)}, \qquad \Gamma^*\psi^5_j = r \eta^1_{\sigma(j)},
\quad \Gamma^*\psi^6_j = r \eta^2_{\sigma(j)},
\end{equation}
where $\sigma$ is the permutation that exchanges indices 2 and 3.

To show holomorphicity of $\gamma$, we have
to show that the pullback under $\gamma=q\circ \Gamma$ of the (1,0)-forms on $G(2,6)$
are (1,0)-forms on $M$, i.e., in the span of $\w^1 +\ri \w^3$ and $\w^2+\ri \w^4$.  (Note
that it is equivalent to show this for the pullbacks under $\Gamma$ for the
forms $\psi^m_1 -\ri \psi^m_2$.)  Using \eqref{phivals}
and \eqref{translate},
we compute
\begin{equation}\label{Gammapullbacks}
\begin{aligned}
\Gamma^*(\psi^3_1-\ri \psi^3_2) &= \phi^2_1-\ri \phi^2_3 &&= (u_1 - \ri u_2)(\w^2+\ri \w^4),\\
\Gamma^*(\psi^4_1 -\ri \psi^4_2)&=\phi^4_1-\ri \phi^4_3 &&= -(u_2+\ri u_1) (\w^2+\ri\w^4),\\
\Gamma^*(\psi^5_1 -\ri\psi^5_2) &= r (\eta^1_1 -\ri \eta^1_3) &&= r(\w^2+\ri \w^4),\\
\Gamma^*(\psi^6_1 -\ri\psi^6_2) &= r (\eta^2_1 -\ri \eta^2_3) &&= \ri r (\w^2+\ri\w^4).
\end{aligned}
\end{equation}
Notice that the holomorphic curve in $G(2,6)$ is not generic; indeed, if $M$ were
determined by specifying an arbitrary holomorphic curve in the Grassmannian as the image $\gamma(M)$,
then one would expect the Cartan character $s_1$ of $\I^+$ to be 6.
Instead, as we will see below, $M$ is in part determined by a holomorphic curve in a different Hermitian symmetric space.
\end{proof}

For the rest of this subsection, we will focus on 2-ruled austere submanifolds in $\R^6$ with $\k=2$,
the last type discussed in Theorem \ref{k2specials}.
As in the proof of that theorem, an adapted frame along such a submanifold $M$
defines the map $\Gamma:M \to SO(6)$ given by \eqref{Gammadef}.
Now let $\pi:SO(6) \to SO(6)/U(3)$ be the quotient map, where $U(3)$ is the intersection
of $SO(6)$ with
$$GL(3,\C)^\R = \{ M \in GL(6,\R) | M \bigJ = \bigJ M\}, \qquad
\bigJ := \left[\begin{smallmatrix}
0 & -1 & 0 & 0 & 0 & 0\\
1 & 0 & 0 & 0 & 0 & 0\\
0 & 0 & 0 & -1 & 0 & 0\\
0 & 0 & 1 & 0 & 0 & 0\\
0 & 0 & 0 & 0 & 0 & -1\\
0 & 0 & 0 & 0 & 1 & 0\end{smallmatrix}\right].
$$
On $SO(6)$, define the complex-valued 1-forms
$$
\begin{aligned}
\beta^1 &= \psi^4_1 - \ri \psi^4_2 +\ri(\psi^3_1 - \ri \psi^3_2),\\
\beta^2 &= \psi^6_1 -\ri\psi^6_2 + \ri(\psi^5_1 -\ri\psi^5_2),\\
\beta^3 &=\psi^6_3 -\ri \psi^6_4 + \ri(\psi^5_3 -\ri\psi^5_4)
\end{aligned}
$$
The quotient $SO(6)/U(3)$ has real dimension 6, and the space of semibasic forms
for the projection $\pi$ is spanned by the real and imaginary parts of the
$\beta^\ell$ for $1\le \ell \le 3$.
(These forms annihilate the left-invariant vector fields in the subalgebra $\mathfrak u(3)$, which
span tangent spaces of the fibres of $\pi$.)  The forms $\beta^i$ satisfy
\begin{equation}\label{betastreqs}
d\begin{bmatrix}\beta^1 \\ \beta^2 \\ \beta^3 \end{bmatrix}=
\Upsilon
\& \begin{bmatrix}\beta^1 \\ \beta^2 \\ \beta^3 \end{bmatrix},
\end{equation}
where
$$
\Upsilon:=\frac12
\begin{bmatrix}
2\ri(\psi^2_1+\psi^4_3) & \psi^5_3 -\ri \psi^5_4 +\psi^6_4+\ri \psi^6_3 &
            -(\psi^5_1-\ri \psi^5_2+\ri\psi^6_1+\psi^6_2) \\
-\psi^5_3 -\ri \psi^5_4 -\psi^6_4+\ri \psi^6_3 & 2\ri(\psi^2_1+\psi^6_5) &
            \psi^3_1-\ri \psi^3_2 +\ri\psi^4_1 +\psi^4_2\\
\psi^5_1+\ri \psi^5_2-\ri\psi^6_1+\psi^6_2 &
    -(\psi^3_1+\ri \psi^3_2 -\ri\psi^4_1 +\psi^4_2) &
        2\ri(\psi^4_3 + \psi^6_5)\end{bmatrix},
$$
indicating that the complex span of the $\beta^\ell$ is a pullback of a well-defined Pfaffian
system on $SO(6)/U(3)$, and these are the (1,0)-forms of an invariant (integrable) complex structure on the quotient.

It is evident from \eqref{Gammapullbacks} that
$\Gamma^*\beta^1=\Gamma^*\beta^2=0$.  Moreover,
\begin{equation}\label{gammastarbeta3}
\Gamma^*\beta^3 = r(\phi^6_2+\phi^5_4 +\ri(\phi^5_2-\phi^6_4)) \equiv
yr (\w^2 + \ri \w^4)\quad \tmod \I^+,
\end{equation}
indicating that the map $\pi \circ \Gamma:M \to SO(6)/U(3)$ has rank 2,
and is holomorphic.

In general, the space $SO(2n)/U(n)$ may be identified with
the set of orthogonal complex structures on $\R^{2n}$; in this case, with $n=3$,
it may also be identified with $\CP^3$, in the following way.\footnote{We learned
this identification in a paper of Abbena, Garbiero and Salamon \cite{Sal}.}
Let $V=\C^4$ with the standard hermitian metric, and
let $W=\R^6$ with the Euclidean metric.  Then $\Lambda^2 V = \C^6 = W\otimes \C$,
and each complex structure $\rJ$ on $W$ corresponds (by
associating to $\rJ$ its $+\ri$ eigenspace) to a totally isotropic subspace $E \subset W \times C$.
Each such subspace $E$ is of the form $E_u = \{ u \bigwedge v | v \in V\}$ for some
vector $u\in \C^4$.  The map $\rJ \mapsto E_u \mapsto u$ is well-defined up to
complex multiple, and identifies $SO(6)/U(3)$ with $\CP^3$.  Moreover,
the standard K\"ahler form on $\CP^3$ pulls back to
$\beta^1 \wedge \overline{\beta^1} + \beta^2 \wedge \overline{\beta^2} + \beta^3 \wedge \overline{\beta^3}$
on $SO(6)$, and $SO(6)$ may be identified with the unitary frame bundle of
$\CP^3$, with connection forms given by the components of $\Upsilon$.

The following result shows that the association of $M$ with a holomorphic
curve in $\CP^3$ is surjective but not 1-to-1:

\begin{theorem}  Let $\CC$ be a holomorphic curve in $\CP^3$.  Given a non-planar point $p\in \CC$,
there is an open neighborhood $U \subset \CC$ containing $p$ and a 2-ruled austere
manifold $M \subset \R^6$ such that $\pi\circ \Gamma(M)=U$.  Such manifolds $M$
depend on a choice of 4 functions of 1 variable.
\end{theorem}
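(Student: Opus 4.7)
The plan is to apply the Cartan--K\"ahler theorem to a restriction of the augmented Pfaffian system $\I^+$ from the proof of Theorem~\ref{k2specials}(iii), after forcing $\pi\circ\Gamma$ to trace out the prescribed curve $\CC$. Informally, the character $s_1=8$ of $\I^+|_V$ should split into $4$ functions worth of ``curve data'' (a holomorphic curve in $\CP^3$ is locally described by $2$ holomorphic functions, i.e.\ $4$ real functions of $1$ real variable) and $4$ functions worth of ``data over the curve'', which is exactly what the theorem claims.

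First I would set up the reduced ambient space. The preimage $P:=\pi^{-1}(\CC)\subset SO(6)$ is an $11$-dimensional principal $U(3)$-bundle over $\CC$. The non-planar condition at $p$ guarantees that the osculating flag of $\CC$ (point $\subset$ tangent line $\subset$ osculating $\CP^2 \subset \CP^3$) is a complete holomorphic flag near $p$, so one can choose a Frenet section $\sigma:U\to P$ on a neighborhood $U\subset\CC$ of $p$ and trivialize $P|_U\cong U\times U(3)$. Any adapted frame $\Gamma:M\to SO(6)$ for a 2-ruled austere $M$ with $\pi\circ\Gamma(M)\subset\CC$ has image in $P$, so one considers the submanifold $W\subset V$ on which $\Gamma$ takes values in $P$ and pulls $\I^+$ back to $W$. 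On $W$, further impose that $\Gamma^*\beta^3$ be a nonzero multiple of the $(1,0)$-form of $\CC$ pulled back through $\sigma$; by \eqref{Gammapullbacks} the additional equations $\Gamma^*\beta^1=\Gamma^*\beta^2=0$ are already automatic for solutions of $\I^+$. Call the resulting Pfaffian system $\J$; its integral $4$-folds (with the independence condition $\w^1 \& \w^2 \& \w^3 \& \w^4 \ne 0$) correspond precisely to the desired framed 2-ruled austere 4-folds over $U$.

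To apply Cartan--K\"ahler, one then computes the Cartan characters of $\J$. Using the structure equations~\eqref{betastreqs} for the $\beta^\ell$ and the pullback identities \eqref{Gammapullbacks}--\eqref{gammastarbeta3}, the computation mirrors that of the proof of Theorem~\ref{k2specials}(iii), but with the $\w^1+\ri\w^3$ direction now pinned to the prescribed holomorphic tangent of $\CC$. The expected outcome is that $\J$ is involutive with $s_1=4$ and $s_2=0$, so that Cartan--K\"ahler produces integral $4$-folds through every reference frame over $p$, each depending on a choice of $4$ functions of $1$ variable; projecting to $\R^6$ yields the required $M$.

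The hard part will be the detailed tableau analysis: one must verify that restriction from $V$ to $W$ reduces the $s_1=8$ tableau by exactly $4$, and that the additional integrability conditions \eqref{phivals} (already present for 2-ruled austere submanifolds) remain consistent after the restriction, without any leftover $s_2$ term that would overshoot the count. Holomorphicity of $\CC$---equivalent to the vanishing of the $(0,1)$-parts of $\beta^1,\beta^2,\beta^3$ along $P$---is what should ultimately make these compatibilities automatic, and the non-planar hypothesis at $p$ is what ensures the Frenet reduction is well-defined and the character $s_1=4$ is actually attained on a neighborhood.
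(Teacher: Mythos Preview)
Your overall strategy matches the paper's: restrict the augmented system to frames whose $\Gamma$-image lies over $\CC$, show the resulting system is involutive with terminal character $s_1=4$, and apply Cartan--K\"ahler. The paper's execution differs from your outline in two places worth noting.

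First, rather than choosing a Frenet section of $P=\pi^{-1}(\CC)$, the paper reduces $P$ stepwise using the fiberwise $U(3)$-action. Writing $\beta^\ell = f^\ell\,\pi^*dz$ on $P$, the vector $(f^1,f^2,f^3)$ transforms under the standard $U(3)$ representation along each fiber, so one first cuts down to the locus $N'$ where $f^1=f^2=0$, and then---using non-planarity, which here is exactly the nonvanishing of the second fundamental form of $\CC$---to the locus $N''$ where the remaining coefficient $k$ in $\psi^5_1-\ri\psi^5_2 = k\beta^3$ is real and positive. This is equivalent in content to your Frenet reduction, but makes the role of ``non-planar'' explicit.

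Second, and this is where your sketch has a real gap, the paper finds a specific algebraic coupling between $\CC$ and the austere parameters: comparing \eqref{gammastarbeta3} with $\psi^5_1-\ri\psi^5_2 = k\beta^3$ forces $k=1/y$ on any solution. The working manifold is then the hypersurface $\Sigma=\{y=1/k\}$ inside $\Gamma^{-1}(N'')$, a $14$-dimensional space with an explicit coframe, and it is on $\Sigma$ that the tableau computation is actually carried out. The outcome is involutivity with $s_1=4$, but only at points where $x\ne y/2$; you should expect such a genericity condition to appear in your computation as well. One small slip: by \eqref{Gammapullbacks} and \eqref{gammastarbeta3} the $(1,0)$-form of $\CC$ pulls back to a multiple of $\w^2+\ri\w^4$, not $\w^1+\ri\w^3$---the latter directions are tangent to the ruling, where $\gamma$ is constant.
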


\begin{proof}The proof of Theorem \ref{k2specials} part (iii) shows that
along $M$ there is an orthonormal frame $(e_1, \ldots, e_6)$ such that
$e_5 \cdot \II(e_i,e_j)=S^1_{ij}$ and $e_6 \cdot \II(e_i, e_i)=S^2_{ij}$ for
matrices
$$S^1 = r\begin{bmatrix}0 & 1 & 0 & 0\\ 1 & 0 & 0 & x\\ 0 & 0 & 0 &-1\\0 & x & -1 & 0\end{bmatrix},\qquad
S^2 = r\begin{bmatrix} 0 & 0 & 0 & 1 \\ 0 & y-x & 1 & 0\\ 0 & 1 & 0 & 0\\ 1& 0 & 0 & x-y\end{bmatrix}
$$
and some positive functions $r,x,y$ along $M$.
To construct $M$, we will set up a Pfaffian system, similar to the augmented standard system $\I^+$, satisfied by the orthonormal frame.

Let $\F_o$ be the orthonormal frame bundle of $\R^6$; in terms of the
bundle $\F$ of semi-orthonormal frames defined in \S\ref{standardsec},
$\F_o$ is the sub-bundle of $\F$ on which
\begin{equation}\label{onspecial}
g_{11}=g_{22}=1,\qquad g_{12}=0
\end{equation}
hold.
The structure equations of $\F_o$ are the same as those given by equations
\eqref{denustreqs} through \eqref{blockstreqs}, but with the specialization
\eqref{onspecial} taken into account, $\kappa$ is skew-symmetric and
$\xi = -\stranspose{\eta}$.

We adjoin $r,x,y$ as new variables, taking value in the positive octant $L\subset \R^3$, and define our Pfaffian system $\J$ on $\F_o \times L$ to
be generated by
$$\theta^1, \theta^2, \eta^a_i - S^a_{ij} \w^j, \ \phi^4_3-\phi^2_1,\  \phi^4_1-\phi^3_2.$$
As in \eqref{Gammadef}, we define a map $\Gamma:\F_o \times L \to SO(6)$, whose
value is the matrix with columns $(e_1, e_2, e_3, e_4, \nu_1, \nu_2)$.
We will now show how, given an arbitrary holomorphic curve $\CC \subset \CP^3$,
we construct an integral manifold of $\J$ whose image, under $\pi\circ \Gamma$,
is an open neighborhood of $p\in \CC$.

Let $N = \pi^{-1}(\CC)$.  On $N$, the complex span of the 1-forms $\beta^\ell$ is one-dimensional at each point.  If
$z$ is a local holomorphic coordinate on $\CC$ near $p$,
then there will be complex functions $f^\ell$ such that $\beta^\ell = f^\ell\pi^* dz$ on $N$.
Substituting this in \eqref{betastreqs} gives
\begin{equation}\label{dief}
d f^\ell \equiv \Upsilon^\ell_m f^m \tmod dz.
\end{equation}
Let $F$ be a fiber of $\pi:N \to \CP^3$.  Since such fibers are left
cosets of $U(3) \subset SO(6)$, $U(3)$ acts simply transitively
on them by right multiplication.  This action is generated infinitesimally by the left-invariant vector
fields on $SO(6)$ that are tangent to subalgebra $\mathfrak u(3)$ at the identity.  Thus, such vector
fields give a frame field tangent to $F$.  Because $\Upsilon$ is $\mathfrak u(3)$-valued, \eqref{dief} shows that
as the action of $U(3)$ moves points along $F$,
the corresponding action on the vector with components $f^\ell$ is isomorphic to the standard action of $U(3)$ on $\C^3$.
Thus, in each fiber there is a subset where $f^1=f^2=0$, and
the union of these subsets is a smooth submanifold $N' \subset N$ of codimension 4 within $N$.
(Note that the definition of $N'$ does not depend on the choice of local coordinate on $\CC$.)

Because $\beta^1=\beta^2=0$, then $\Upsilon^1_3=-\psi^5_1+\ri\psi^5_2$ and
$\Upsilon^2_3 = \psi^3_1 - \ri\psi^3_2$.  Thus,
the restriction of \eqref{betastreqs} to $N'$ implies that there
are complex functions $f,k$ on $N'$ such that
\begin{equation}\label{fksemi}
\psi^3_1 - \ri\psi^3_2 = f \beta^3, \qquad \psi^5_1-\ri\psi^5_2 = k \beta^3.
\end{equation}
Besides $\beta^1=\beta^2=0$, these are the
only linearly dependencies among the left-invariant 1-forms of $SO(6)$
when restricted to $N'$; thus, the 1-forms
$\psi^2_1, \psi^4_3, \psi^5_3, \psi^5_4, \psi^6_3,\psi^6_4,\psi^6_5$
(which include the real and imaginary parts of $\beta^3$) form a coframe on $N'$.

Computing
$$\Gamma^*(\psi^5_1 - \ri \psi^5_2) = \eta^1_1 -\ri \eta^1_3 \equiv
r(\w^2+\ri \w^4) \mod \J$$
and comparing with \eqref{gammastarbeta3} shows that
on the image under $\Gamma$ of a solution of $\J$, we must have
$k$ equal to $1/y$.  Thus, we need to restrict to the
subset $N'' \subset N'$ where $k$ is real and positive.
Differentiating \eqref{fksemi} gives
\begin{equation}\label{deek}
dk = \ri k(\psi^2_1 -\psi^4_3 -2\psi^6_5 ) -f(\psi^6_4 +\ri\psi^6_3)
+ w\beta^3
\end{equation}
for some complex function $w$ on $N'$.
This equation shows that the
subgroup of $U(3)$ stabilizing $N'$ can be used to make $k$ real and positive,
provided that $f$ and $k$ are not both identically zero along a fiber.
Since $f,k$ give the components of the second fundamental form of $\CC$
as a holomorphic submanifold of $\CP^3$, then $f=k=0$ along the fiber above $p$
means that $p$ is a planar point of $\CC$.
Thus, we will restrict to non-planar points of $\CC$.
Then, in each fiber of $N'$ there is a subset where $k>0$, and the
union $N''$ of these subsets is a smooth codimension-one submanifold of $N'$.

If we let $f = g+\ri h$ and $w=u+\ri v$ for real functions $g,h,u,v$, then taking the real and imaginary
parts of \eqref{deek} gives
\begin{equation}\label{dkval}
dk = -v\psi^5_3 +u\psi^5_4 +(h+u)\psi^6_3 +(g-v)\psi^6_4
\end{equation}
and $k(\psi^2_1 -\psi^4_3 -2\psi^6_5 ) +u\psi^5_3 + v\psi^5_4 -(g-v)\psi^6_3 -(h+u)\psi^6_4=0$
on $N''$.  Because of the last equation, we may use the
restrictions of the 1-forms $\psi^4_3, \psi^5_1, \psi^5_2, \psi^5_3, \psi^5_4, \psi^6_5$
as a coframe on $N''$.

Let $\Sigma$ be the smooth hypersurface in $\Gamma^{-1}(N'')$ defined
by $y=1/k$. Because the fibers of $\Gamma$ have dimension 9,
$\Sigma$ has dimension 14, with coframe given by
$\w^1, \ldots, \w^4$, $\theta^1, \theta^2$,
$\phi^4_2$, $\eta^1_1$, $\eta^1_2$, $\eta^1_3$, $\eta^1_4$, $\kappa^2_1$, $dx$ and $dr$.
From \eqref{dkval} we deduce that
\begin{equation}\label{dyval}
dy = y^2(g \eta^1_2 + h \eta^1_4) -y^3((u+h)\eta^1_1 + (v-g)\eta^1_3)
\end{equation}
on $\Sigma$.
The 1-forms in $\J$ pull back to $\Sigma$ to give a rank 6 system generated by
$\theta^1, \theta^2$ and
$$
\begin{aligned}
\alpha^1 &= \eta^1_1 - r\w^2,\\
\alpha^2 &= \eta^1_2 + r\w^4,\\
\alpha^3 &= \eta^1_3 - r(\w^1 + x\w^4),\\
\alpha^4 &= \eta^1_4 + r(\w^3-x\w^2).
\end{aligned}
$$

On $\Sigma$, there are 1-forms $\pi_1, \pi_2, \pi_3, \pi_4$
which are linearly independent combinations of $\phi^4_2, \kappa^2_1, dx, dr$ modulo $\w^1, \ldots,\w^4$, such that
$$
d\begin{bmatrix}\alpha^1 \\ \alpha^2 \\ \alpha^3 \\ \alpha^4 \end{bmatrix}
\equiv
\begin{bmatrix} 0 & \pi_1 & 0 & \pi_2 \\ 0 & \pi_2 & 0 & -\pi_1 \\
\pi_1 & (y-2x)\pi_4  & \pi_2 & \pi_3  \\
\pi_2 & \pi_3  & -\pi_1 & -(y-2x) \pi_4
\end{bmatrix}
\& \begin{bmatrix}\w^1 \\ \w^2 \\ \w^3 \\ \w^4 \end{bmatrix}
\tmod \alpha^1, \ldots, \alpha^4.
$$
This indicates that, at points where $x\ne y/2$, the system $\J\restr_\Sigma$
is involutive with terminal Cartan character $s_1=4$.
Local existence of solutions then follows by applying the Cartan-K\"ahler Theorem at such points.
\end{proof}

\end{document}